\documentclass[preprint,11pt,authoryear]{elsarticle}
\usepackage{enumitem}
\usepackage{amssymb}
\usepackage{eurosym}
\usepackage{color}
\usepackage{bm}
\usepackage{booktabs}
\usepackage{enumitem}
\usepackage{multirow}
\usepackage{longtable}
\usepackage{arydshln}
\usepackage{algorithmicx}
\usepackage[top=2.2cm,bottom=2.5cm,left=2.5cm,right=2.5cm]{geometry}
\usepackage{hyperref}
\usepackage{mathtools}
\usepackage{algorithm,algpseudocode}
\usepackage{subcaption}
\usepackage{tikz}
\usepackage{pgfplots}
\usetikzlibrary{patterns}
\usetikzlibrary{arrows}
\usepackage{amsmath}
\usepackage{etoolbox}
\usepackage{setspace}
\usepackage{etoolbox}
\usepackage{amsthm,amsmath,eurosym}

\newtheorem{theorem}{Theorem}
\newtheorem{proposition}{Proposition}
\newtheorem{example}{ {Example}}

\newcommand{\R}{\mathbb{R}}

\newcommand{\pt}{\text{ }\forall\text{ }}
\newcommand{\tq}{\text{ }:\text{ }}
\newcommand{\N}{\mathbb{N}}

\newcommand{\io}{[0,1]}

\usepackage{resizegather}

\pgfplotsset{compat=1.18} 

\journal{European Journal of Operational Research}

\usepackage{lineno, blindtext}

\makeatletter
\def\ps@pprintTitle{%
  \let\@oddhead\@empty
  \let\@evenhead\@empty
  \def\@oddfoot{\reset@font\hfil\thepage\hfil}
  \let\@evenfoot\@oddfoot
}
\makeatother

\begin{document}

\begin{frontmatter}
    \title{A hybrid approach for building fuzzy numbers based \\ on data and expert knowledge}
    \author[JAEN1]{Diego {\sc  Garc{\'i}a-Zamora}$^*$}\ead{dgzamora@ujaen.es}
    \author[CEGIST]{Jos{\'e} {\sc Rui~Figueira}}\ead{figueira@tecnico.ulisboa.pt }
     \author[INESC-ID]{Miguel {\sc Couceiro}}\ead{miguel.couceiro@inesc-id.pt }
    
    \address[JAEN1]{Department of Mathematics, University of Ja{\'e}n, 23071 Ja{\'e}n, Spain}
    \address[CEGIST]{CEGIST, Instituto Superior T\'{e}cnico,  Universidade de Lisboa, Portugal}
    \address[INESC-ID]{INESC-ID, Instituto Superior T\'{e}cnico,  Universidade de Lisboa, Portugal}
    \cortext[cor1]{Corresponding author at: Department of Mathematics, University of Ja{\'e}n, 23071 Ja{\'e}n, Spain}

    \begin{abstract}
    \noindent This paper presents a hybrid socio-technical methodology for constructing fuzzy numbers from numerical data while incorporating expert knowledge through an interactive Deck of Cards (DoC) process. The approach extends the existing DoC membership function construction framework by introducing a data-driven pipeline based on a convex version of fuzzy $k$-Means in which each computational step produces intermediate outputs that are translated into card-based structures for expert validation and tuning. The proposed method ensures interpretability, adaptability, and consistency between empirical evidence and expert semantics.
    \end{abstract}
    \vspace{0.25cm}
    \begin{keyword}
Fuzzy numbers \sep Deck of Cards \sep Hybrid modeling \sep Data-driven membership construction \sep Convex Fuzzy $k$-Means
\end{keyword}   
\end{frontmatter}
\section{Introduction}
The construction of appropriate membership functions is fundamental to the reliability and interpretability of fuzzy models \citep{DOMBI19901}. Over the past decades, two main research lines have emerged for determining membership functions \citep{Schwaab2015,Bilgiç2000}: data-driven approaches and expert-driven methodologies.

Data-driven methods rely on exploiting the information extracted from numerical datasets \citep{Yadav2014}. These techniques typically optimize membership parameters to reflect statistical patterns, leveraging clustering  \citep{dubois1980,DalleauCS20,Khairuddin2021,preudhomme}, neural networks \citep{Ross2010,George2020}, histogram-based identification \citep{DUBOIS198315,Soelistijanto2022}, 
statistical modelling \citep{Majumder2007,Porebski2016}, metaheuristics such as genetic algorithms \citep{Suryana2009,Khairuddin2022} or particle swarm 
optimization \citep{Fang2008,Li2019}, and general optimization schemes \citep{Wu2014,Bhattacharyya2020}. These contributions have been extensively adopted in classification, control, image processing, and pattern recognition problems \citep{Schwaab2015,MEDASANI1998391,Miliauskaite2020,HASUIKE2015994}. Nevertheless, their robustness completely depends on the quality and abundance of available data, limiting their applicability in contexts with scarce or 
heterogeneous observations \citep{Schwaab2015}.

Expert-driven methods pursue the opposite direction by constructing membership functions based on human judgment rather than empirical evidence. This area includes classical linguistic modelling frameworks \citep{2-tupleBook}, where preferences are elicited through intuitive procedures such as direct rating \citep{Norwich1984}, polling \citep{Hersh1976,AhmadShukri2021,Jain2010,Nguyen2014},
interval estimation \citep{Wang1986,Ukhobotov2017}, reverse rating \citep{Turksen1991}, exemplification \citep{Norwich1984}, or pairwise comparison \citep{Nieto-Morote2023}. These manual approaches enable the capture of subjective nuances in human cognition, but tend to be time-consuming and may require the assistance of an analyst to avoid cognitive overload for non-technical experts \citep{sancho1999}.

More recent solutions in the expert-driven line attempt to simplify the elicitation process by assigning predefined shapes (often triangular or trapezoidal) to the membership functions \citep{Wu2014,Miliauskaite2020}. Representative examples include the 2-tuple fuzzy linguistic modelling \citep{Herrera2000} and Hesitant Fuzzy Linguistic Term Sets \citep{Rodriguez2012}, which remain highly influential in the literature.  However, such predefined structures implicitly assume a shared semantic understanding among decision-makers, neglecting the variability in how individuals interpret qualitative terms \citep{MG1,HET1}. This limitation becomes even more pronounced in group decision-making settings, where information is expressed using different scales \citep{HET1,HET2}, with different semantic perceptions of language \citep{PIS1,PIS2}, and with different levels of granularity \citep{MG1,MG2}.

In this context, there is a research gap regarding the construction of membership functions that accounts simultaneously for empirical patterns and human interpretability \citep{DOMBI2022206,Medvediev2020,Kara2021,Wang2022}. Most existing approaches only involve experts at a preliminary stage and do not allow them to actively co-construct the membership functions as the model evolves \citep{Chakraborty2001}. Recent works stress the need for socio-technical frameworks in which decision-makers and analysts jointly build uncertainty representations through iterative feedback and negotiation \citep{corrente2021}.

The \emph{Deck-of-Cards-based Membership Functions} (DoC-MF) methodology, recently introduced in \cite{DoC-MF}, provides a structured and interpretable approach for constructing fuzzy numbers through expert elicitation. By enabling decision-makers to express qualitative differences using an intuitive card-based representation, the method successfully bridges human reasoning and uncertainty modelling without requiring direct numerical judgments.

However, the original DoC-MF framework was designed mainly for expert-driven contexts, where membership functions must be constructed solely by interacting with a decision-maker. In many real-world applications, relevant numerical datasets are available and could contribute to the modelling of uncertainty. Sole reliance on expert assessments may lead to biases or representations inconsistent with empirical evidence.

To address these limitations, we propose a hybrid socio-technical methodology that integrates data-driven insights with the interpretability and flexibility of the DoC-MF approach. The main contributions of this work are threefold:
\begin{itemize}
    \item We introduce a novel data-driven pipeline that extracts preliminary fuzzy information from numerical observations given as frequency tables.
    \item We define a mechanism to translate intermediate computational outputs into the card-based structures from the DoC-MF approach, enabling expert interaction, validation, and refinement at every stage.
    \item We provide a unified membership-function construction process that ensures consistency between empirical patterns and expert semantics.
\end{itemize}

To support the data-driven component, we also develop a modified version of the classical fuzzy $k$-means clustering algorithm, referred to as \emph{convex FKM} (C-FKM). By enforcing convexity constraints, the method guarantees that all clusters produced during computation correspond to valid fuzzy numbers compatible with the DoC elicitation protocol.

Thus, the resulting hybrid approach preserves transparency and interpretability while adapting to heterogeneous or non-standard data distributions. By integrating empirical data with expert intuition to construct membership functions, this hybrid approach addresses critical challenges across diverse high-stakes domains. In predictive medicine, for instance, it enables the refinement of diagnostic models by reconciling quantitative clinical metrics with the nuanced qualitative judgment of practitioners. Similarly, in industrial process control and autonomous systems, this integration allows automated controllers to inherit the resilience and 'common sense' of human operators, ensuring stability even when environmental conditions deviate from historical patterns. Furthermore, in fields such as environmental risk assessment or financial forecasting, the capacity to anchor mathematical models in subjective expertise ensures a more faithful representation of reality, overcoming the limitations of purely data-driven models that may lack context or fail to account for rare but impactful events. In this manuscript, we focus on a numerical study on real educational performance data that demonstrates the ability of our approach to produce semantically meaningful membership functions aligned with the underlying evidence.

The remainder of this paper is as follows. Section \ref{sec:preliminaries} introduces the basic notions in the literature and the notation necessary to understand the paper. In Section \ref{sec:convex_fkm}, we develop the theoretical basis to extend the classical fuzzy $k$-means to be compatible with the DoC-MF elicitation method. Subsequently, Section \ref{sec:hybrid} describes in detail our hybrid methodology based on data and expert interaction to obtain fuzzy numbers. In Section \ref{sec:numerical_example}, we show a complete example of the hybrid process in a concrete situation, whereas in Section \ref{sec:comparison} we provide some numerical comparisons to illustrate the performance of the method when facing different data distributions. Finally, Section \ref{sec:conclusion} concludes the manuscript.

\section{Preliminaries}
\label{sec:preliminaries}


In this section we recall the basic terminology and notation needed throughout the paper. We will also revisit  the ``Deck-of-Cards method'' and ``Fuzzy $k$-means clustering'' that will be the bases of the fuzzy number construction that we propose. 

\subsection{Fuzzy sets and fuzzy numbers}
 A fuzzy set on $\R$ is a mapping $A:\R\to[0,1]$. For $\alpha\in]0,1]$, the $\alpha$-cut of $A$ is the subset $A^\alpha=\{x\in\R\tq A(x)\geq \alpha\}$ whereas $A^0$ denotes the topological closure of the support of $A$ \cite{wang2012mathematics}. Note that $A^0$, which we will call the support of $A$, models the points of the real line that, to some extent, show some belongingness to the fuzzy set $A$, whereas $A^1$, the so-called core of $A$, denotes the values in $\R$ for which we have full certainty of their belongingness to $A$.

A fuzzy number is a fuzzy set ${A}$  satisfying (i) $A^1\neq \emptyset$ ({\it normality}), (ii) $A^\alpha$ are intervals for any $\alpha\in\io$ ({\it convexity}), and (iii) $A^0$ is bounded. Fuzzy numbers express the degree to which a value $x\in\R$ is considered compatible with the corresponding uncertainty concept. For the representation to be meaningful in decision contexts, the membership function must satisfy several structural properties that ensure interpretability and internal consistency.
Equivalently, the membership function increases at the left-hand side of the core and decreases at its right-hand side. This left–right unimodal behaviour reflects the assumption that uncertainty is highest in the tails and lowest around the central concept represented by the core. 

Finally, a family of $k\in\N$ fuzzy numbers $A_1,\ldots ,A_k$ is called a fuzzy partition over $[a,b]$ if $\sum_{j=1}^kA_j(x)=1$, for all $x\in[a,b]$. This notion is essential in this work because it ensures that all membership functions collectively represent the whole domain without ambiguity, allowing each value to express its affiliation to different concepts while maintaining global interpretability and consistency in decision-making.

\subsection{The Deck-of-Cards method}

The DoC method is an elicitation technique originally designed to assist experts in expressing preferences and qualitative distinctions using a simple and cognitively intuitive representation \cite{corrente2021}. Instead of numerical scales, 
decision makers manipulate a collection of identical cards to indicate 
relative differences between ordered categories. The number of blank cards inserted between two reference points reflects the perceived strength of the distinction: the more cards-units placed in between, the more distant the concepts are considered to be. For instance, let us assume that there are three ordered objects $l_1\prec l_2\prec l_3$ whose performance has to be assessed. Suppose that the decision-maker establishes that the difference in intensity of his/her preference between the first and second objects can be modeled with four units, whereas his/her preference between the second and third objects can be modeled with six, i.e.: 
\begin{equation*}
    l_1 \quad [4] \quad l_2 \quad [6] \quad l_3
\end{equation*}
Then, the analyst assigns numerical values in a $[0,1]$ scale to these objects proportionally:
\begin{equation*}
    v(l_1)=0\quad v(l_2)=0.4 \quad v(l_3)=1.
\end{equation*}

This approach avoids imposing direct numerical 
judgements on experts, while preserving ordinal and relational information. A key property of the DoC method is that it can approximate any set of values, using a large enough number of cards, as stated in Theorem~\ref{th: num_to_Card}. 
\begin{theorem}[\cite{DoC-MFT2}]\label{th: num_to_Card}
Let ${\bf x}=(x_0,\ldots,x_n)\in[0,1]^{\,n+1}$ with $n>1$, be an $n$-tuple satisfying 
$$0=x_0 < x_1 < \cdots < x_n = 1,$$ and let $m\in\mathbb{N}$ be such that $
\big\lfloor 10^m x_{i-1} \big\rfloor 
<
\big\lfloor 10^m x_i \big\rfloor,~
 i=1,\ldots,n.$ Define the rational approximation
\[
r_i=\frac{\lfloor 10^m x_i \rfloor}{10^m},
\quad i=0,\ldots,n,
\]
with $N = 10^m$ being the total number of units.
Then, the DoC method can represent the ordered tuple $x$ with precision $10^{-m}$, using a sequence of $N$ cards partitioned into $n$ groups of consecutive units determined recursively by
\[
\begin{aligned}
c_1 &= N r_1 = \lfloor 10^m x_1 \rfloor, \\[1mm]
c_i &= N r_i - \sum_{j=1}^{i-1} c_j 
     = \lfloor 10^m x_i \rfloor - \sum_{j=1}^{i-1} c_j,
     \qquad i = 2,\ldots,n.
\end{aligned}
\]
The integers $c_1,\ldots,c_n$ specify exactly the number of units to place between $r_{i-1}$ and $r_i$, and satisfy
\[
r_i = \frac{1}{N} \sum_{j=1}^i c_j,
\qquad i=1,\ldots,n.
\]
Hence, the DoC representation constructed from these recursive counts approximates the original tuple $x$ with accuracy $10^{-m}$.
\end{theorem}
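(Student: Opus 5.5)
The plan is to check, directly from the definitions, that the recursively defined integers $c_1,\ldots,c_n$ are legitimate card counts and reproduce the truncations $r_i$. Write $a_i=\lfloor 10^m x_i\rfloor$ for $i=0,\ldots,n$. Since $x_0=0$ and $x_n=1$, we have $a_0=0$ and $a_n=10^m=N$. The recursion says $c_1=a_1$ and $c_i=a_i-\sum_{j<i}c_j$.

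The first step is an induction on $i$ showing that $\sum_{j=1}^{i}c_j=a_i$. The base case is the definition of $c_1$. For the inductive step, substitute the hypothesis into the recursion. This gives
\[
c_i=a_i-a_{i-1}\qquad (i=1,\ldots,n),
\]
with the convention $a_0=0$, which telescopes to the claimed partial sums. Dividing by $N$ yields the stated identity $r_i=\frac1N\sum_{j=1}^i c_j$.

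Next, I will use the hypothesis on $m$ to conclude that each $c_i=a_i-a_{i-1}$ is a positive integer. This means each group contains at least one unit, and consecutive reference points remain distinct, so the DoC configuration respects the strict order $x_0<\cdots<x_n$. Summing over all groups gives $\sum_{j=1}^n c_j=a_n=N$. Hence exactly $N$ cards are used and the last point is placed at $r_n=1=x_n$.

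Finally, the precision claim follows from the basic floor inequality $10^m x_i-1<a_i\le 10^m x_i$. Dividing by $10^m$ gives $0\le x_i-r_i<10^{-m}$ for every $i$. The DoC proportional assignment, $v(l_i)=\frac1N\sum_{j\le i}c_j$ as in the worked example, therefore returns exactly $r_i$, which is within $10^{-m}$ of $x_i$.

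I do not expect a genuine obstacle. The only points needing care are interpretive. First, I will make explicit that the DoC value of the $i$-th reference point is the cumulative card count divided by the total $N$. Second, I will note that the separation hypothesis on $m$ is exactly what rules out empty groups. If needed, I will add a remark that such an $m$ always exists: $\min_i (x_i-x_{i-1})>0$, so any $m$ with $10^{-m}<\min_i(x_i-x_{i-1})$ forces $\lfloor 10^m x_{i-1}\rfloor<\lfloor 10^m x_i\rfloor$.
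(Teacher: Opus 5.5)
Your proof is correct and complete. Writing $a_i=\lfloor 10^m x_i\rfloor$, the recursion gives $\sum_{j\le i}c_j=a_i$, hence $c_i=a_i-a_{i-1}$ with $a_0=0$. The separation hypothesis on $m$ then gives $c_i\ge 1$. Since $x_n=1$, the total is $\sum_j c_j=a_n=N$. Finally, the floor inequality gives $0\le x_i-r_i<10^{-m}$.

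The paper does not prove this statement itself; it imports it from \cite{DoC-MFT2}, so there is no in-paper argument to compare against. Your direct verification is the natural one.

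Two small remarks:
\begin{itemize}
\item The induction is unnecessary. The recursion $c_i=a_i-\sum_{j<i}c_j$ is already, for each $i$, the identity $\sum_{j\le i}c_j=a_i$.
\item Your existence remark for $m$ is correct and worth keeping, since the statement only assumes such an $m$. Your reading of the bracketed numbers as units, rather than the classical ``$e$ blank cards $=e+1$ units'' convention, matches the paper's own example $l_1\,[4]\,l_2\,[6]\,l_3$.
\end{itemize}
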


In recent work, the DoC methodology has been extended to support the 
construction of fuzzy numbers \cite{DoC-MF} to obtain a fuzzy representation of the linguistic terms of a given scale. To do so, a three-step socio-technical methodology is carried out. First, a value scale for the representative points of the fuzzy numbers is identified by placing blank cards between the different levels of the scale. Each card corresponds to a discrete unit, and the cumulative structure of inserted cards can be naturally translated into values within $[0,1]$. Subsequently, the boundaries of the support and the core are inferred by questioning the decision-maker about the range of absence or full confidence for the corresponding fuzzy concept. Finally, the DoC method is used again to refine the left and right-hand sides of the membership functions.

This DoC-based elicitation process provides several advantages. First, it 
avoids arbitrary functional assumptions such as triangular or trapezoidal 
profiles, letting the membership curve emerge from expert reasoning. Second, 
the card representation allows decision makers to revise and justify modelling 
choices incrementally, facilitating communication and negotiation in 
multi-stakeholder settings. In addition, the approach establishes traceability 
between uncertainty modelling and human judgement, which is crucial for 
applications in which acceptance of the results depends on interpretability.

\subsection{ Fuzzy k-means clustering}
\label{subsec:fkm}

Since several components of the proposed hybrid approach rely on the
partitioning of empirical data into graded groups, we recall here the
classical  { FKM} (also known as  {fuzzy $c$-means})
algorithm and introduce the notation used throughout this work.

Let $D=\{x_1,\ldots,x_n\}$ be a set of real-valued observations.
Given a fixed number $k \ge 2$ of clusters, the goal of FKM
is to determine both the cluster centers (prototypes) $v_1,\ldots,v_k \in \mathbb{R}$, and the membership degrees $u_{ij} \in [0,1]$ expressing the degree to which $x_i$ belongs to cluster $j$,  and such that for each data point $x_i, ~i=1,\ldots,n,$ we have that $\sum_{j=1}^k u_{ij} = 1$, and that the memberships are soft rather than crisp. The classical objective function is $J\colon [a,b]^{nk}\times[a,b]^k$  defined by 
\begin{equation*}
J(U,V) = \sum_{i=1}^n \sum_{j=1}^k u_{ij}^m\, \|x_i - v_j\|^2,
\label{eq:fkm_obj}
\end{equation*}
where $U = (u_{ij})$ is the membership matrix, 
$V = (v_1,\ldots,v_k)$ is the tuple of cluster centers, and $m>1$ is the standard fuzzifier parameter (controlling the degree of fuzziness); for further background see \cite{BEZDEK1984191}. 
Typical values are $m=1.5$ or $m=2$. The minimization of $J$ under the membership constraints leads to the well-known update equations:
\begin{equation}
\label{eq:u&v}
u_{ij}
=
\left(
\sum_{\ell=1}^k 
\left(
    \frac{\|x_i - v_j\|}{\|x_i - v_\ell\|}
\right)^{\!\frac{2}{m-1}}
\right)^{-1}, \quad \text{where}
\quad
v_j
=
\frac{
    \sum_{i=1}^n u_{ij}^m\, x_i
}{
    \sum_{i=1}^n u_{ij}^m
},
\quad j=1,\ldots,k.
\end{equation}
In the classical approach, these two equations are applied iteratively until convergence \cite{BEZDEK1984191}, typically until
\(
\|V^{(t+1)} - V^{(t)}\|
\)
falls below a tolerance threshold. 

\section{Convex  fuzzy $k$-means}
\label{sec:convex_fkm}

In this section, we introduce C-FKM, an adaptation of the classical FKM specifically designed so that every class produced by the algorithm is a  {fuzzy number} by construction. The method enforces a local (convex) support around each centroid and forces the fuzzy partition to be adjacent in the value axis: each observation may belong only to the two clusters whose centres bracket the observation. This locality constraint yields membership functions with compact supports and non-empty cores, which are essential for the hybrid construction of interpretable membership functions that represent value scales.

In the following, we describe the model, give closed-form formulae to compute centres and memberships under the new constraints, present a convergence result, and finish with a simple iterative algorithm to compute a C-FKM solution.

Let $D=\{x_1,\dots,x_n\}\subset[a,b]\subset\mathbb R$ be the dataset, in which $[a,b]$ are bounds chosen either from the data or from an external source. Let us fix the number of clusters $k\ge 2$ and the fuzzifier parameter $m>1$. Denote by $V=(v_1,\dots,v_k)$ the ordered centroids with $a < v_1 < v_2 < \cdots < v_k < b.$ For each observation $x_i$, we define the  {left-bracketing index} $j(i)$ as the unique index such that $v_{j(i)} \le x_i < v_{j(i)+1},$ with the convention $v_0:=a$ and $v_{k+1}:=b$. Thus $x_i$ lies in the interval $[v_{j(i)},v_{j(i)+1})$. Let us denote by $U=(u_{ij})\in\mathcal{M}_{n\times k}([0,1])$ the membership matrix obeying the standard simplex constraints
\[
u_{ij}\ge 0,\qquad \sum_{j=1}^k u_{ij}=1,\qquad i=1,\dots,n.
\]
The key  {adjacency constraint} of C-FKM is that for each $i$ we allow nonzero memberships only to the two adjacent clusters:
\[
u_{ij}=0\quad\text{for all } j\notin\{j(i),\, j(i)+1\}.
\]
Finally, as in classical  FKM, we consider the objective function $J:\mathcal{M}_{n\times k}([0,1])\times[a,b]^k$
\[
J(U,V) \;=\; \sum_{i=1}^n \sum_{j=1}^k u_{ij}^m\, (x_i-v_j)^2, \quad\pt U\in \mathcal{M}_{n\times k}([0,1]), V\in[a,b]^k
\]
Therefore, the C-FKM problem is the constrained minimization of $J$ subject to the simplex and adjacency constraints above:
\[
\begin{aligned}
\min_{\substack{U=(u_{ij})\\ V=(v_1,\dots,v_k)}} 
\; J(U,V)
&= \sum_{i=1}^n \sum_{j=1}^k u_{ij}^{\,m}\, (x_i-v_j)^2 \\[4pt]
\text{subject to}\qquad 
& u_{ij} \ge 0,\qquad 
\sum_{j=1}^k u_{ij}=1,\qquad i=1,\dots,n, \\[4pt]
& u_{ij} = 0 \quad \text{for all } j\notin\{j(i),\, j(i)+1\}, \\[4pt]
& a < v_1 < v_2 < \cdots < v_k < b.
\end{aligned}
\]

Note that the restriction that each $x_i$ can only belong to the two adjacent clusters is the structural change that enforces convex and contiguous supports, and guarantees fuzzy-number outputs, as we will prove below. However, following the structure of the classical FKM, let us start with two theorems that give closed-form updates in the two alternating steps of the algorithm: (i) compute centers given memberships, and (ii) compute memberships given centers.

\begin{proposition}[Center update]\label{th:update_centers}
Let $U=(u_{ij})\in\mathcal{M}_{n\times k}([0,1])$ be any feasible membership matrix satisfying the simplex and adjacency constraints. Then the minimizer $V^\star$ of $J_U:[a,b]^k\to\R$, {\it i.e.,}  the function $V\to J(U,V) $ resulting of keeping $U$ fixed in $J$, is unique and given by 
\begin{equation*}\label{eq:center_update}
v_j^\star \;=\; \frac{\displaystyle\sum_{i=1}^n u_{ij}^m\, x_i}
                     {\displaystyle\sum_{i=1}^n u_{ij}^m},
\qquad j=1,\dots,k.
\end{equation*}
\end{proposition}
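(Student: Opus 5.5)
With $U$ fixed, $J_U$ splits as a sum of one-variable functions, one for each centre:
\[
J_U(V)=\sum_{j=1}^k \phi_j(v_j),\qquad \phi_j(t)=\sum_{i=1}^n u_{ij}^m\,(x_i-t)^2 ,
\]
with no cross terms between different $v_j$. Here the adjacency constraint enters only through which $u_{ij}$ vanish, and these are data fixed in advance. So minimising $J_U$ over $[a,b]^k$ reduces to minimising each $\phi_j$ separately over $[a,b]$. The minimiser is unique as soon as each $\phi_j$ has a unique minimiser.

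Next I would treat each $\phi_j$ as a quadratic polynomial in $t$. Write $S_j=\sum_i u_{ij}^m$. Expanding gives
\[
\phi_j(t)=S_j t^2-2t\sum_i u_{ij}^m x_i+\sum_i u_{ij}^m x_i^2,
\]
so $\phi_j''=2S_j$. When $S_j>0$, $\phi_j$ is strictly convex. Setting $\phi_j'(t)=0$ gives the unique critical point $v_j^\star=\sum_i u_{ij}^m x_i/S_j$, which is the global minimiser on $\R$. This point is a convex combination of the $x_i\in[a,b]$, with weights $u_{ij}^m/S_j$, so it lies in $[a,b]$. Hence it is also the unique minimiser on $[a,b]$. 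Taking the product over $j$ then gives uniqueness of $V^\star$.

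The main obstacle is the degenerate case $S_j=0$, i.e.\ some cluster receives zero membership from every observation. This can happen under the adjacency constraint, for instance when no data point falls in $[v_{j-1},v_{j+1})$. In that case $\phi_j\equiv 0$, the formula is $0/0$, and uniqueness fails. I would handle this by stating explicitly the implicit hypothesis that every column of $U$ has some positive entry, $\sum_i u_{ij}>0$ for all $j$; this is natural, since an empty cluster is meaningless. I would also remark that the strict ordering $a<v_1<\dots<v_k<b$ is not part of the domain $[a,b]^k$ of $J_U$ in the statement. Ordering is therefore not claimed here, and the proposition concerns only the unconstrained-in-order minimisation over the box.
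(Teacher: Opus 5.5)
Your proof is correct and follows the same route as the paper: both use the separability of $J_U$ in the $v_j$ and set each partial derivative to zero. Your version is more careful than the paper's one-line argument in three ways: strict convexity ($\phi_j''=2S_j>0$) justifies uniqueness, the convex-combination remark shows the critical point lies in $[a,b]$, and you correctly note that when $\sum_i u_{ij}^m=0$ the formula becomes $0/0$ and uniqueness fails, a hypothesis the paper leaves implicit.
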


\begin{proof}
For fixed $U\in\mathcal{M}_{n\times k}([0,1])$, the objective is given by $$J_U(V)=\sum_{j=1}^k \sum_{i=1}^n u_{ij}^m\, (x_i-v_j)^2, ~\text{for} ~V\in[a,b]^k.$$ 
Thus  the global minimum is unique and can be found when the gradient $ \nabla J_U$ is null, {\it {\it i.e.,} }  \begin{equation}\label{eq:gradient}
   \nabla J_U=[\frac{\partial}{\partial v_1}J_U(V),\ldots, \frac{\partial}{\partial v_k}J_U(V)]=\mathbf{0}=[0,\dots,0]. 
\end{equation}
Since $\frac{\partial}{\partial v_j}J_U(V)=
-2\sum_{i=1}^n u_{ij}^m (x_i - v_j)$, it then follows that \eqref{eq:gradient} holds only if $
v_j = \frac{\sum_{i=1}^n u_{ij}^m x_i}{\sum_{i=1}^n u_{ij}^m},$ and s the prooof is now complete.
\end{proof}

We now provide a way to compute memberships given the cluster centers.
\begin{theorem}[Membership update]\label{th:update_MFs}
Let us consider fixed centroids $V=(v_1,\dots,v_k)$ with $a< v_1<\cdots<v_k<b$. For each $i\in\{1,\dots,n\}$, let $j=j(i)$ be the left-bracketing index so that $v_j\le x_i < v_{j+1}$. Under the adjacency constraint that $u_{i\ell}=0$ for $\ell\notin\{j,j+1\}$, the unique minimum $U^\star\in\mathcal{M}_{n\times k}([0,1])$ of $J_V(U)=\sum_{j=1}^k u_{ij}^{m}d_{ij}$, where $d_{ij}=(x_i-v_j)^2\pt i=1,...,n,j=1,...,k$, subject to $\sum_\ell u_{i\ell}=1$ and $u_{i\ell}\ge 0$ is given by 
\begin{enumerate}
    \item $u_{i,j} = \Big(1 + \Big(\dfrac{d_{i,j}}{d_{i,j+1}}\Big)^{\!1/(m-1)}\Big)^{-1},$ and $
u_{i,j+1} =\Big( 1+\Big(\dfrac{d_{i,j+1}}{d_{i,j}}\Big)^{\!1/(m-1)}\Big)^{-1}, $ whenever $d_{i,j}>0$ and $d_{i,j+1}>0$, 
\item  $u_{i,j}=1$ and $u_{i,j+1}=0$, whenever  $d_{i,j}=0$ and $d_{i,j+1}>0$, and  
\item $u_{i,j+1}=1$ and $u_{i,j}=0$, whenever $d_{i,j+1}=0$ and $d_{i,j}>0$.
\end{enumerate} 
\end{theorem}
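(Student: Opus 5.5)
The objective decouples across rows: $J_V(U)=\sum_{i}\sum_{\ell}u_{i\ell}^m d_{i\ell}$, and the constraints (simplex and adjacency) act on each row separately. So I would minimise row by row. Fix $i$ and write $j=j(i)$, $p=d_{i,j}$, $q=d_{i,j+1}$. Under the adjacency constraint the row reduces to a single variable $t=u_{i,j}\in[0,1]$ with $u_{i,j+1}=1-t$, and the problem becomes minimising
\[
\varphi(t)=t^m p+(1-t)^m q,\qquad t\in[0,1].
\]
Uniqueness of $U^\star$ then follows from uniqueness of the minimiser of each $\varphi$, since the feasible set is a product of the row feasible sets.

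\emph{Case 1} ($p>0$, $q>0$). Here $\varphi$ is strictly convex on $[0,1]$, because $m>1$ gives $\varphi''(t)=m(m-1)\big(t^{m-2}p+(1-t)^{m-2}q\big)>0$ on $(0,1)$. Hence any interior critical point is the unique global minimiser. I would check that $\varphi'(0)=-mq<0$ and $\varphi'(1)=mp>0$, so the minimiser is interior. Setting $\varphi'(t)=m\big(t^{m-1}p-(1-t)^{m-1}q\big)=0$ gives
\[
\frac{1-t}{t}=\Big(\frac{p}{q}\Big)^{1/(m-1)},
\]
that is, $t=\big(1+(p/q)^{1/(m-1)}\big)^{-1}$. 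For $1-t$, I would rewrite $1-t=\frac{(p/q)^{1/(m-1)}}{1+(p/q)^{1/(m-1)}}$ and divide numerator and denominator by $(p/q)^{1/(m-1)}$, which yields $\big(1+(q/p)^{1/(m-1)}\big)^{-1}$. These are exactly the formulas in item 1.

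\emph{Cases 2 and 3.} If $p=0$ and $q>0$, then $\varphi(t)=(1-t)^m q\ge 0$, with equality iff $t=1$. This gives the unique minimiser $u_{i,j}=1$, $u_{i,j+1}=0$. Case 3 is symmetric: $\varphi(t)=t^m p$ vanishes only at $t=0$. Note that $p=q=0$ cannot occur, since $v_j<v_{j+1}$ means $x_i$ cannot equal both centres; so the three cases are exhaustive.

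The only delicate points are the strict convexity argument, which uses $m>1$ (for $1<m<2$ the exponent $m-2$ is negative, but the expression is still positive on the open interval, and convexity extends to the closed interval by continuity), and the boundary conventions $v_0=a$, $v_{k+1}=b$. For $j(i)\in\{0,k\}$ only one real cluster is available, so that row is forced to take the value $1$ on it. I would remark that the statement implicitly concerns $1\le j(i)\le k-1$, with the boundary rows trivially determined.
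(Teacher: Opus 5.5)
Your proposal is correct and follows the same route as the paper: decouple by rows, eliminate $u_{i,j+1}=1-t$, minimise the convex function $t^m d_{i,j}+(1-t)^m d_{i,j+1}$ via its first-order condition, and handle the degenerate cases by nonnegativity. You also supply details the paper leaves implicit:
\begin{itemize}
\item the second-derivative and endpoint-derivative checks that guarantee an interior, unique minimiser;
\item the fact that $d_{i,j}=d_{i,j+1}=0$ cannot occur;
\item the trivial rows with $j(i)\in\{0,k\}$.
\end{itemize}
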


\begin{proof}
Given the adjacency constraint, for a fixed $i=1,...,n$, the objective function reduces
to
\[
J_i(u_{i,j},u_{i,j+1}) = u_{i,j}^m d_{i,j} + u_{i,j+1}^m d_{i,j+1},
\]
subject to the constraints $ u_{i,j}+u_{i,j+1}=1,\ u_{i,j},u_{i,j+1}\ge 0$. Therefore, we can eliminate $u_{i,j+1}=1-u_{i,j}$ and minimize the one-dimensional convex
function $f(t)=t^m d_{i,j} + (1-t)^m d_{i,j+1}$ on $t\in[0,1]$. For the
nondegenerate case $d_{i,j},d_{i,j+1}>0$ the first-order condition
$f'(t)=0$ yields
\[
m t^{m-1} d_{i,j} - m (1-t)^{m-1} d_{i,j+1} = 0
\]
and therefore
\[
\Big(\frac{t}{1-t}\Big)^{m-1} = \frac{d_{i,j+1}}{d_{i,j}}
\quad\Longrightarrow\quad
\frac{t}{1-t} = \Big(\frac{d_{i,j+1}}{d_{i,j}}\Big)^{1/(m-1)}.
\]
Solving it for $t$, gives exactly the expression in the statement.
The degenerate cases where one distance is zero follow from the observation that $J_i\ge 0$ and any zero-distance term forces the objective to be minimized by placing full weight on that zero-distance cluster.
\end{proof}

\begin{theorem}\label{th:CFKM_fuzzynumbers}
Let $V=(v_1,\dots,v_k)$ be a set of ordered centroids, let $U=(u_{ij})$ be the membership matrix obtained in Theorem \ref{th:update_MFs}, with fuzzifier $m>1$, and let $V_j$ denote the membership function of cluster $j=1,...,k$ obtained by interpolation of the values $u_{ij}$ over $x$ considering the centroids $V$. Then, for every $j=1,\dots,k$ the function $V_j$ satisfies
\begin{itemize}
  \item normality ($V_j(v_j)=1$),
  \item its support is contained in $[v_{j-1},v_{j+1}]$,
  \item it is monotone increasing on the left interval $[v_{j-1},v_j]$ and
        monotone decreasing on the right interval $[v_j,v_{j+1}]$.
\end{itemize}
Hence, each $V_j$ is a fuzzy number, and the collection
$\{V_1,\dots,V_k\}$ is a fuzzy partition of $[a,b]$.
\end{theorem}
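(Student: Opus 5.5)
The plan is to make the interpolated membership functions explicit and read off each property from the closed forms in Theorem~\ref{th:update_MFs}. For $x\in[a,b]$ with $v_j\le x<v_{j+1}$, I take $V_j(x)$ and $V_{j+1}(x)$ to be the values given by the formula of Theorem~\ref{th:update_MFs}, now regarded as functions of $x$ rather than of the data point $x_i$, and set $V_\ell(x)=0$ for all other $\ell$. At data points this agrees with $u_{ij}$ by construction. Between data points I interpret ``interpolation'' as evaluating this same closed form; a piecewise-linear interpolation of the $u_{ij}$ inherits the same monotonicity and the same sum-to-one property, so the argument covers it too.

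Writing $d_j(x)=(x-v_j)^2$, on $(v_j,v_{j+1})$ we have
\[
V_j(x)=\Big(1+\big(\tfrac{x-v_j}{v_{j+1}-x}\big)^{2/(m-1)}\Big)^{-1},\qquad V_{j+1}(x)=1-V_j(x).
\]
The properties then follow in order.

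\textbf{Normality.} At $x=v_j$ we have $d_j=0$ and $d_{j+1}>0$, so case~2 of Theorem~\ref{th:update_MFs} gives $V_j(v_j)=1$. Hence the core is non-empty.

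\textbf{Support.} By the adjacency constraint, $V_j$ can be nonzero only where $j\in\{j(x),j(x)+1\}$, that is, only for $x\in[v_{j-1},v_{j+1})$. Therefore the closure of the support is contained in $[v_{j-1},v_{j+1}]$. For $j=1$ and $j=k$ the conventions $v_0=a$ and $v_{k+1}=b$ apply.

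\textbf{Monotonicity.} The ratio $r(x)=(x-v_j)/(v_{j+1}-x)$ is strictly increasing on $[v_j,v_{j+1})$, since its numerator increases and its denominator decreases and stays positive. The map $s\mapsto(1+s^{2/(m-1)})^{-1}$ is decreasing on $s\ge 0$ because $m>1$. So $V_j$ decreases on $[v_j,v_{j+1}]$ from $1$ towards $0$, with limit $0$ at $v_{j+1}$, matching case~3 there. Symmetrically, on $[v_{j-1},v_j]$ the function $V_j$ equals the ``right'' member $1-V_{j-1}$ of the pair $(j-1,j)$, so it increases.

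\textbf{Fuzzy number.} An $[0,1]$-valued function that increases up to the point $v_j$, equals $1$ there, and then decreases has interval $\alpha$-cuts. Combined with the bounded support and normality, each $V_j$ is a fuzzy number.

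\textbf{Fuzzy partition.} For every $x$, exactly two memberships are possibly nonzero, and they sum to $1$ by the simplex constraint in Theorem~\ref{th:update_MFs}. Hence $\sum_\ell V_\ell(x)=1$ on $[a,b]$.

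The step I expect to be the main obstacle is the boundary handling. The bracketing intervals are half-open, so the value at $x=v_{j+1}$ comes from the next interval. I need the one-sided limits to match: the limit of $V_j$ from the left at $v_{j+1}$ is $0$, which matches the case~3 value; at $v_j$ the value $1$ is attained. This continuity is what makes the cuts closed intervals.

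The end clusters need separate care. On $[a,v_1)$ the bracketing index is $0$, and there is no cluster $0$ to share membership with. I would treat $[a,v_1]$ and $[v_k,b]$ as belonging fully to clusters $1$ and $k$ respectively, assigning all membership to the existing adjacent cluster; this is the only reading under which the partition property holds. I would state this convention explicitly in the proof.
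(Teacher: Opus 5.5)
Your proposal is correct and is essentially the paper's argument: normality from the zero-distance case of Theorem~\ref{th:update_MFs}, support from the adjacency constraint, monotonicity from the monotonicity of the ratio $d_j/d_{j+1}$ composed with the decreasing map $s\mapsto(1+s^{1/(m-1)})^{-1}$, and the partition property from the simplex constraint. Working with $x$ as a continuous variable and using $V_j=1-V_{j-1}$ on the left branch lets you cover the zero-distance, continuity and end-cluster cases that the paper omits or calls ``analogous''. For the piecewise-linear reading, make sure the centroids are interpolation nodes, as ``considering the centroids $V$'' intends, so that normality and the support bound still hold.
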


\begin{proof}
Normality and the support condition follow immediately from the model
construction and the adjacency constraint. Indeed, by construction,
in the minimization for a data point located exactly at $v_j$, the unique optimal assignment (see Theorem~\ref{th:update_MFs}) gives full membership to cluster $j$, hence $V_j(v_j)=1$ (normality). The adjacency constraint forces any data point $x$ to receive positive membership for cluster $j$ only if $x\in[v_{j-1},v_{j+1})$, so the support of $V_j$ is contained in the compact interval $[v_{j-1},v_{j+1}]$.

It remains to show the monotonicity assertions. Fix an index $j$ and
consider first the right-hand side interval $[v_{j},v_{j+1}]$. Let us consider an index $i=1,...,n$ such that $v_{j}<x_i<x_{i+1}<v_{j+1}$. In that case, the monotonicity of $x\to (x-v_j)^2$ and $x\to (x-v_{j+1})^2$  implies that $d_{i,j}d_{i+1,j+1}\leq d_{i+1,j}d_{i,j+1}$. If we assume that all these values are positive,  we obtain
\begin{align*}
    \dfrac{d_{i+1,j}}{d_{i+1,j+1}}\geq \dfrac{d_{i,j}}{d_{i,j+1}} &\iff 
    1 + \Big(\dfrac{d_{i+1,j}}{d_{i+1,j+1}}\Big)^{\!1/(m-1)}\geq 1 + \Big(\dfrac{d_{i,j}}{d_{i,j+1}}\Big)^{\!1/(m-1)}\\
    &\iff \Big(1 + \Big(\dfrac{d_{i,j}}{d_{i,j+1}}\Big)^{\!1/(m-1)}\Big)^{-1}\geq \Big(1 + \Big(\dfrac{d_{i+1,j}}{d_{i+1,j+1}}\Big)^{\!1/(m-1)}\Big)^{-1}\\
    &\iff  u_{ij}\geq u_{(i+1)j},
\end{align*}
which is the monotonicity of $V_j$ in its right-hand side. In the case that some of them are zero, the monotonicity still holds, taking into account that some memberships will be equal to $1$. We omit here the complete discussion for the sake of space. The argument for the left-hand side $[v_j,v_{j+1}]$ is analogous.

Combining normality, compact support, and the established monotonicity on both sides shows that each $V_j$ is a normal, convex membership function supported on a compact interval, which is precisely the definition of a fuzzy number. Finally, since for each $x_i$, the adjacency constraint enforces that the memberships across clusters sum to one, the family $\{\mu_j\}$ forms a fuzzy partition of $[a,b]$. This completes the proof.
\end{proof}

The following theorem justifies the alternating optimization that we will use in practice: update memberships with Proposition \ref{th:update_MFs}, then centers with Proposition \ref{th:update_centers}, and repeat until convergence.

\begin{theorem}[Convergence to local minimum]
Let $\{(U^{(t)},V^{(t)})\}_{t\ge 1}$ be the sequence produced by the
C-FKM alternating updates (from a certain initialized centroid vector $V^0$):
\[
U^{(t+1)} \leftarrow \arg\min_U J(U,V^{(t)}) \quad\text{(using Proposition \ref{th:update_MFs})},
\]
\[
V^{(t+1)} \leftarrow \arg\min_V J(U^{(t+1)},V) \quad\text{(using Proposition \ref{th:update_centers})}.
\]
Then the sequence of objective values $J(U^{(t)},V^{(t)})$ is
nonincreasing and converges to a finite limit.
\end{theorem}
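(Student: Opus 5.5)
The plan is the standard monotone-descent argument for alternating minimization. The two ingredients are that each half-step cannot increase $J$, and that $J$ is bounded below by $0$.

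\textbf{Membership step.} I will show $J(U^{(t+1)},V^{(t)})\le J(U^{(t)},V^{(t)})$. By Theorem~\ref{th:update_MFs}, $U^{(t+1)}$ minimizes $J(\cdot,V^{(t)})$ over all matrices satisfying the simplex and adjacency constraints relative to $V^{(t)}$. The subtle point is that the adjacency pattern is defined through the left-bracketing indices $j(i)$, and these depend on the current centroids. So $U^{(t)}$, which was built with respect to $V^{(t-1)}$, need not be feasible for the adjacency constraint induced by $V^{(t)}$.

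I will handle this as follows. The objective separates over $i$, so it suffices to compare the two assignments point by point. For a fixed $x_i$, the restricted minimum over the bracketing pair equals $\min_{t\in[0,1]} f(t)$ as in the proof of Theorem~\ref{th:update_MFs}. I then want to check that the bracketing pair $(v_{j},v_{j+1})$ with $v_j\le x_i<v_{j+1}$ is the best choice among all adjacent pairs for that point. Any other pair $(v_\ell,v_{\ell+1})$ lies entirely on one side of $x_i$, so replacing it by the bracketing pair decreases both distances componentwise, and hence decreases $f$ pointwise in $t$. Granting this, the minimum over the bracketing pair is no larger than the value of \emph{any} two-adjacent-cluster assignment, and in particular no larger than the value of $U^{(t)}$ at $V^{(t)}$. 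This is the step I expect to be the main obstacle, and I will write it carefully, including the boundary conventions $v_0=a$ and $v_{k+1}=b$. If it fails in some edge case, the fallback is to interpret the theorem with the adjacency pattern fixed along the iteration, and argue descent within that fixed feasible set.

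\textbf{Center step.} I will show $J(U^{(t+1)},V^{(t+1)})\le J(U^{(t+1)},V^{(t)})$. This is immediate from Proposition~\ref{th:update_centers}: $V^{(t+1)}$ is the unique global minimizer of the convex quadratic $J_{U^{(t+1)}}$, and $V^{(t)}$ is a competitor. I also note that the weighted means lie in $[a,b]$, since every $x_i\in[a,b]$. Hence the iterate stays in the domain, and it stays strictly inside whenever the weights involve data in the interior.

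\textbf{Conclusion.} Chaining the two inequalities gives $J(U^{(t+1)},V^{(t+1)})\le J(U^{(t)},V^{(t)})$ for all $t$, so the sequence is nonincreasing. Every term $u_{ij}^m(x_i-v_j)^2$ is nonnegative, so $J\ge 0$. The monotone convergence theorem for bounded monotone real sequences then yields a finite limit $J^\ast\ge 0$.
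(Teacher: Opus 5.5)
Your skeleton (two half-step inequalities, $J\ge 0$, monotone convergence) is the same as the paper's. You are also right that the membership half-step is not automatic. The admissible set for $U$ depends on $V$ through the bracketing indices $j(i)$, so $U^{(t)}$, built from the bracketing at $V^{(t-1)}$, need not be admissible at $V^{(t)}$. The paper's proof simply asserts $J(U^{(t+1)},V^{(t)})\le J(U^{(t)},V^{(t)})$ because ``each step minimizes over a block of variables'' and does not address this.

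The lemma you propose to close the gap is false, and not only in edge cases. For $A,B>0$ the reduced cost of a pair is $\min_{s\in[0,1]}\bigl(s^mA+(1-s)^mB\bigr)=\bigl(A^{-1/(m-1)}+B^{-1/(m-1)}\bigr)^{-(m-1)}$, which is symmetric and increasing in each argument. The pair $(v_{j-1},v_j)$ shares $v_j$ with the bracketing pair $(v_j,v_{j+1})$, so the comparison reduces to $d_{i,j-1}$ versus $d_{i,j+1}$. Nothing forces $v_{j+1}$ to be the closer of the two.

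Concretely, take $m=2$, centroids $0.4<0.5<1$ and $x_i=0.55$. The pair $(0.4,0.5)$ costs $0.0225\cdot 0.0025/0.025=0.00225$, whereas the bracketing pair $(0.5,1)$ costs $0.0025\cdot 0.2025/0.205\approx 0.00247$. This is exactly what happens if the centre step moves $v_j$ from $0.6$ to $0.5$ across $x_i$. The old row $(u_{i,j-1},u_{ij})=(0.1,0.9)$, optimal for $(0.4,0.6)$, still costs $0.00225$ at the new centroids, so the contribution of $x_i$ to $J$ increases in the membership step. At the ends the lemma fails for every point below $v_1$: the simplex and adjacency constraints force $u_{i1}=1$, with cost $d_{i1}$, which is strictly larger than the best mixture of clusters $1$ and $2$.

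Hence $J(U^{(t+1)},V^{(t)})\le J(U^{(t)},V^{(t)})$ is not established at any iteration in which a centroid crosses a data point. Your fallback of freezing the adjacency pattern proves a statement about a different algorithm, since the iteration in the theorem recomputes $j(i)$ from $V^{(t)}$ at every step.

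What the block-descent argument does justify is $J(U^{(t+1)},V^{(t+1)})\le J(U^{(t)},V^{(t)})$ for every $t$ at which the bracketing indices computed from $V^{(t-1)}$ and from $V^{(t)}$ coincide, because then $U^{(t)}$ is admissible at $V^{(t)}$. To reach the statement as written you need one of the following:
\begin{enumerate}
\item an additional ingredient that controls the rows whose bracket changes, which the block-minimization structure alone does not supply;
\item a modification of the scheme, for instance a centre step that is not allowed to move a centroid across a data point, which keeps $U^{(t)}$ admissible and $V^{(t)}$ a competitor;
\item a modification of the statement.
\end{enumerate}
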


\begin{proof}
Note that each update step minimizes $J$ over a subset of the variables while keeping the others fixed. Consequently, \[
J(U^{(t+1)},V^{(t)}) \le J(U^{(t)},V^{(t)}),\qquad
J(U^{(t+1)},V^{(t+1)}) \le J(U^{(t+1)},V^{(t)}).
\]
Thus, the sequence of objective values is nonincreasing and bounded
below by $0$, and it must be convergent by the Monotone Convergence Theorem \cite{Spivak2008}.
\end{proof}

Note that the C-FKM algorithm alternates the closed-form updates just derived. Below, we give a concise pseudo-code that can be used in implementations.
\begin{enumerate}
    \item {Initialization:} choose $k$ initial centroids $v_1^{(0)}<\cdots<v_k^{(0)}$ (for instance by evenly spaced values between $a$ and $b$ or using percentiles). Set $t\leftarrow 0$.
    \item {Repeat:}\begin{enumerate}
        \item For each $i=1,\dots,n$, compute $j(i)$ (left-bracketing index) with respect to $V^{(t)}$. Compute the two distances          $d_{i,j(i)}$ and $d_{i,j(i)+1}$ and update memberships      $u_{i,j(i)}^{(t+1)},u_{i,j(i)+1}^{(t+1)}$ using Proposition~\ref{th:update_MFs}. Set all other $u_{i\ell}^{(t+1)}=0$.
        \item Update each centroid $v_j^{(t+1)}$ according to
              Proposition~\ref{th:update_centers} using $U^{(t+1)}$.
        \item If $\|V^{(t+1)}-V^{(t)}\|$ (or the corresponding decrease of $J$) is below a tolerance level $\tau>0$, stop; else set $t\leftarrow t+1$.
    \end{enumerate}
\end{enumerate}

Let us recall here the main advantages of the C-FKM algorithm. On the one hand, the adjacency constraint ensures that each membership function $V_j$ is zero outside the compact interval between neighboring centroids and also convex. Consequently, the resulting $V_j$ is a fuzzy number. On the other hand, the algorithm produces piece-wise linear fuzzy numbers, which can be integrated within the DoC-MF framework. In this sense, all the steps can be directly translated into cards and shown to the decision-makers for final refinement, as we show in the following section.

\section{Hybrid construction of fuzzy numbers integrating data and expert knowledge}\label{sec:hybrid}

In this section, we introduce a hybrid methodology for constructing 
fuzzy numbers from numerical data while incorporating expert knowledge 
through the DoC-MF method. The proposed process follows the same three-step structure of the DoC-MF approach (value scale construction, identification of core and support, and definition of the left-hand and right-hand sides) but extends each step with a preliminary data-driven procedure based on C-FKM. At every stage, the output produced by the data-driven computation is 
translated into a card-based representation that allows the decision-maker 
to adjust, validate, or refine the membership function before the method continues to the next stage. Below, we describe each phase in detail.

\subsection{Step 1: Value scale construction}
\label{subsec:step1}

The first stage of the hybrid procedure aims at constructing an initial value scale that reflects the empirical structure of the data while remaining interpretable for the decision-makers involved in the process. Following the spirit of the socio-technical approach introduced in the DoC-MF methodology \cite{DoC-MF}, this stage combines a purely data-driven component with an interactive expert-based refinement. Thus, the data-driven component extracts representative reference points from the dataset, whereas the expert component interprets and adjusts these reference points through the DoC method. 

Given the dataset $D=\{x_1,\ldots,x_n\}$, we begin by applying C-FKM (see Section~\ref{sec:convex_fkm}) to partition the data into $k$ overlapping groups. Only the cluster centroids produced by the algorithm, denoted by $v_1,\ldots,v_k$, are required in this step. These centroids serve as representatives of the data distribution, capturing areas of density and patterns in the numeric observations. To ensure consistency in the later stages of the method, we order the centroids so that 
\[
a < v_1 < v_2 < \cdots < v_k < b,
\]
where the bounds $[a,b]$ can be either obtained from the data or given by the decision-makers. Let us define the $(k+2)$-tuple
\[
v = (a, v_1, v_2, \ldots, v_k, b).
\]

Subsequently, the ordered tuple $v$ is translated into a DoC structure by means of Theorem~\ref{th: num_to_Card}, which guarantees that, after choosing an appropriate precision level $10^{-m}$, the distances between consecutive values can be translated into real cards that represent the differences of intensities between the original values. Specifically, for the chosen $m$, we compute the rational approximations $r_i=\lfloor 10^m v_i\rfloor /10^m$ and the recursive sequence of card
counts
\[
c_1 = \lfloor 10^m v_1\rfloor, ~\text{ and }
~
c_i = \lfloor 10^m v_i\rfloor - \sum_{j=1}^{i-1}c_j, ~\text{ for }
 i=2,\ldots,k+1,
\]
where each quantity $c_i$ corresponds to the number of units between
labels $v_{i-1}$ and $v_i$. Then, these units are physically represented as a card chain that translates the spacing of the
centroids, and therefore the geometric structure of the dataset, into an
interpretable visual model that the decision-makers can conveniently examine and manipulate.

Once the card chain is produced, it is presented to the decision-makers so that they can modify the spacing between reference values. This constitutes a crucial part of the hybrid methodology: even if the data suggest certain distances between centroids, the perceived semantic differences between adjacent values may differ from the purely numerical ones.  By inserting or removing blank cards between levels, decision-makers can increase or decrease the discriminatory power in specific regions of the scale. The expert-refined card configuration replaces the raw data-driven structure and becomes the definitive value scale for the subsequent steps of the process.
 
\begin{example}
    To illustrate this mechanism, let us consider a dataset scaled to $[0,1]$ and suppose that  FKM identifies three centroids at $v_1=0.18$, $v_2=0.43$, $v_3=0.72$. Including the endpoints ($0$ and $1$), we obtain the ordered tuple $x = (0,\; 0.18,\; 0.43,\; 0.72,\; 1).$ Assume a desired precision of $10^{-2}$, so that $m=2$ and $N=10^2=100$ units are available. Applying Theorem~\ref{th: num_to_Card}, we compute:
\[
\begin{aligned}
c_1 &= \lfloor 100\cdot 0.18 \rfloor = 18,\\
c_2 &= \lfloor 100\cdot 0.43 \rfloor - 18 = 25,\\
c_3 &= \lfloor 100\cdot 0.72 \rfloor - (18+25) = 29,\\
c_4 &= \lfloor 100\cdot 1.00 \rfloor - (18+25+29) = 28.
\end{aligned}
\]
Thus, the data-driven card chain contains:
\[
18 \text{ cards between } 0 \text{ and } 0.18,\qquad
25 \text{ cards between } 0.18 \text{ and } 0.43,
\]
\[
29 \text{ cards between } 0.43 \text{ and } 0.72,\qquad
28 \text{ cards between } 0.72 \text{ and } 1.
\]

When presented with this structure, a hypothetical decision-maker may judge that the difference between $0.18$ and $0.43$ is too small relative to the
semantic jump perceived between these levels (for example, if these
values correspond to distinct linguistic labels). The decision-maker may therefore insert 5 additional blank cards in the second
interval, bringing its total from 25 to 30. Conversely, the decision-maker may feel that the last interval, between $0.72$ and $1$, is overly stretched and removes 4 cards to reduce it from 28 to 24. The adjusted chain then replaces the original one and encodes both the empirical information contained in the centroids and the semantic judgment of the decision-maker regarding the spacing of values.
\end{example}

This refined value scale becomes the foundation upon which the core and support identification and the construction of the left and right sides of the membership function will be carried out in the subsequent steps of the
hybrid methodology.

\subsection{Step 2: Identification of Cores and Supports}
\label{subsec:step2}

Once the centroids have been validated by the decision-makers in Step~1, the next goal is to further analyze the cores and supports of the fuzzy numbers associated with each class. This requires transforming the validated centroids into complete membership functions, identifying their cores and supports, and allowing the decision-makers to refine the resulting intervals by interacting with the Deck of Cards representation.

Let $V=(v_1,\dots,v_k)$ denote the validated centroids. Using Theorem~\ref{th:CFKM_fuzzynumbers}, the membership function of the cluster associated to the centroid $v_j$ can be updated by the expressions
\begin{gather*}
u_{ij}=V_j(x_i)=
\begin{cases}
\displaystyle 
\left(
1+\left(\frac{(x_i-v_j)^2}{(x_i-v_{j-1})^2}\right)^{\!\frac{1}{m-1}}
\right)^{-1}, & \text{if } v_{j-1} \le x_i < v_{j}, \\[10pt]
\displaystyle 
\left(
1+\left(\frac{(x_i-v_j)^2}{(x_i-v_{j+1})^2}\right)^{\!\frac{1}{m-1}}
\right)^{-1}, & \text{if } v_j \le x_i < v_{j+1}, \\[10pt]
1,& \text{if } j=0 \text{ and }x_i\leq v_0, \text{ or }j=k \text{ and }x_i\geq v_k,\text{ or } x_i=v_j \\[10pt]
0, & \text{otherwise},
\end{cases}
\end{gather*}

Keep in mind that this guarantees (i) normality at $v_j$, (ii) compact support on $[v_{j-1},v_{j+1}]$, and (iii) monotonicity on each side of the centroid (see Theorem \ref{th:CFKM_fuzzynumbers}). To obtain interpretable fuzzy numbers, we define the  {core} of each class $V_j$ as
\[
\mathrm{Core}(V_j)
=
\{ x\in D : V_j(x) \ge 1-\tau \},
\qquad\text{with $\tau\in[0,1]$ typically set to } 0.01.
\]
Since $\mu_j$ is monotone on both sides of $V_j$, the core is always an interval $\mathrm{Core}(V_j) = [\,\underline{c}_j, \overline{c}_j\,]$. These thus-obtained bounds 
\begin{gather*}
    a=\underline{c}_1<\overline{c}_1<\underline{c}_2<\overline{c}_2<...<\underline{c}_k<\overline{c}_k=b
\end{gather*}
are converted into Deck of Cards units using Theorem~\ref{th: num_to_Card}.  
The resulting cards are then shown to the decision-makers, who may modify the cards to enlarge or shrink the cores. Note that this adjustment respects the constraint that cores of different fuzzy numbers cannot overlap. Furthermore, the validated centroids in the previous steps must lie within the core. To guarantee this, it is possible to include the validated centroids in the former chain of values and allow the experts to place cards between the centroids and the bounds of the cores. Once the decision-makers provide revised cores 
\[
\mathrm{Core}(V_j)=[\,\underline{c}'_j,\overline{c}'_j\,],
\qquad j=1,\dots,k,
\]
we recompute the membership functions to incorporate the corrected structural information. The updated memberships follow this rule:
\begin{itemize}
    \item If $x_i \in \mathrm{Core}(V_j)$ for some $j$, then 
    $V_j(x_i)=1$ and $V_\ell(x_i)=0$ for all $\ell\ne j$.
    \item If $x_i$ is not in any core, then it lies between two cores. Let us denote $j(i)$ the index of the cluster at the left of $x_i$, and  
    apply the two-cluster update rule in Proposition~ \ref{th:update_MFs} using the adjacent core bounds as centroids:
    \[
    V_{j(i)}(x_i)
    =
    \left(
    1+
    \left(
    \frac{(x_i-\overline{c}'_{j(i)})^2}{(x_i-\underline{c}'_{j(i)+1})^2}
    \right)^{\!\frac{1}{m-1}}
    \right)^{-1},
    \qquad
    V_{j(i)+1}(x_i)=1-V_{j(i)}(x_i),
    \]
    with all other memberships equal to zero.
\end{itemize}

The updated cores are then passed to the membership-update algorithm.  Points inside a core receive membership one; points between two adjacent cores have their memberships recomputed using the two-cluster rule. This yields a new fuzzy partition that fully incorporates both the data-driven 
structure and the decision-makers' semantic adjustments.

We remark here that the simplex constraint of a fuzzy partition guarantees that the supports can be directly computed from the adjacent cores. Therefore, in this methodology, determining the core could be equivalently done in terms of the supports. In this sense, if a decision-maker is more comfortable with that, it is possible to carry out this step in terms of the information on the bounds of the supports. Keep in mind that the question for the cores should be \emph{for which values do you have full certainty of belonging to the class?}, whereas the question for the support is \emph{for which range, the values do not belong at all to the class?}.

\begin{example}
Suppose that after the first step of our process, the validated centroids are
$V = (0.20,\; 0.50,\; 0.80)$. After applying the update rule, the C-FKM memberships produce the cores (using $\mathrm{tol}=0.99$)
\[
\mathrm{Core}(V_1)=[0.18,0.22],\qquad
\mathrm{Core}(V_2)=[0.48,0.52],\qquad
\mathrm{Core}(V_3)=[0.78,0.82].
\]
These six numerical values are converted into Deck of Cards units 
(see Theorem \ref{th: num_to_Card}), and the decision-makers examine the resulting cards. Suppose they feel the middle core is  {too narrow} and they modify the number of cards. After reversing the process, we obtain 
\[
\mathrm{Core}(V_2)=[0.46,0.54].
\]
\end{example}

\subsection{Step 3: Fine–Tuning the Left and Right Sides}
\label{subsec:step3}

Once the cores and/or supports of all fuzzy numbers have been validated by the decision-makers, the final refinement stage focuses on adjusting the 
 {shape} of the membership functions outside their cores.  In this step, we analyse the monotone branches of each fuzzy number and identify meaningful ``confidence levels'' that indicate how quickly or slowly the membership should decrease as we move away from the core. These confidence levels are subsequently translated into Deck of Cards units, so that decision-makers can directly manipulate and refine them. We begin by selecting a fuzzy number $V_j$ whose shape we want to refine. Given its validated support and core, $\mathrm{Supp}(V_j)=[s_j^l,s_j^r], \mathrm{Core}(V_j)=[c_j^l,c_j^r]$, we focus separately on its left-hand side and right-hand side:
\[
\text{LHS interval: } [\,s_j^\ell, c_j^\ell\,], 
\qquad
\text{RHS interval: } [\,c_j^r, s_j^r\,].
\]
Suppose that, after visual inspection, the decision-makers decide to adjust 
one of the sides of $V_j$. We therefore extract all data points $x_i$ and corresponding membership values $V_j(x_i)$ lying in either the interval $[s_j^l,c_j^l]$ or $[c_j^r,s_j^r]$. These values form a monotone (increasing or decreasing) sequence of the membership degrees. To analyse and structure the shape of this monotone branch, we cluster the membership values $\{V_j(x_i)\}$ in the corresponding interval using the C-FKM algorithm described in Section~\ref{sec:convex_fkm}. Let $k_{\text{side}}$ denote the number of clusters chosen for the refinement. Applying C-FKM to the sequence $\{V_j(x_i)\}$ yields
\[
\bigl\{c_1 < c_2 < \cdots < c_{k_{\text{side}}}\bigr\},
\]
a strictly increasing set of centroid values, each representing a 
``confidence level'' at which the slope of the membership branch exhibits a change in behaviour. 

These centroids can be interpreted as membership levels at which 
decision-makers may want to introduce semantic distinctions: regions of slight decrease, medium decrease, rapid decrease, and so on. At this stage, these centroids can be adjusted by the decision-maker by the Deck of Cards method using Theorem \ref{th: num_to_Card}.

Note that each membership threshold $c_\ell$ corresponds to exactly one point in the corresponding interval of the fuzzy number because the branch is strictly monotone. Thus, for each adjusted centroid $c_\ell$ we compute the  associated $x$-coordinate by linear interpolation
\[
x_\ell 
\;=\; 
\operatorname{interp}(c_\ell;\, \mu_j(x_i), x_i),
\]
{\it i.e.,}  the unique value of $x$ solving $V_j(x)=c_\ell$.  
This gives the breakpoints
\[
s_j^\ell = x_0 < x_1 < \cdots < x_{k_{\text{side}}}=c_j^\ell
\]
along the corresponding side. The ordered tuple $( x_1,\, x_2,\, \dots,\, x_{k_{\text{side}}})$ is translated into DoC units using Theorem~\ref{th: num_to_Card}.  This yields a sequence of discrete units (cards) representing the distances between consecutive breakpoints, each card corresponding to one elementary ``step'' in the decrease of the membership function.

Once the decision-makers finish adjusting the cards, the modified card 
distribution is mapped back to the $x$-axis using the cumulative-card rule, 
thus producing updated breakpoints $\tilde{x}_1, \dots, \tilde{x}_{k_{\text{side}}}$. The refined membership function on the chosen side is then reconstructed as a piecewise-linear function passing through the points
\[
\bigl(\tilde{x}_1, c_1\bigr),\;
\dots,\;
\bigl(\tilde{x}_{k_{\text{side}}}, c_{k_{\text{side}}}\bigr),
\]
and the resulting membership function remains normal, convex, and supported on the same interval.  
This refinement step provides a final synthesis between the data-driven 
structure produced by C-FKM and the semantic adjustments expressed by the
decision-makers.
\begin{example}
Consider a fuzzy number $V_2$ obtained after Step~2 with a validated core and support
\[
\mathrm{Core}(A)=[0.46,\,0.54], 
\qquad 
\mathrm{Supp}(A)=[0.40,\,0.60].
\]
We focus on refining the left-hand side on the interval $[0.40,0.46]$. Suppose that after inspection, the decision-maker is not satisfied with the values produced by applying C-FKM in Step 2. Then, we apply C-FKM on the left-hand side membership values using $k_{\text{side}}=3$ clusters. This yields three increasing centroid membership levels:
\[
c_1 \approx 0.05,\qquad 
c_2 \approx 0.58,\qquad 
c_3 \approx 0.91.
\]
These centroids represent different ``confidence levels'' on the left-hand side branch. At this stage, the decision-makers can see the equivalent number of cards for these levels, {\it i.e.,}  5 units between $0$ and $c_1$, 52 units between $c_1$ and $c_2$, 34 units between $c_2$ and $c_3$, and 9 units between $c_3$ and $1$. Let us assume that he/she agrees with them and does not modify any card.

Since $V_2$ is strictly increasing on the LHS, each $c_\ell$ corresponds to a unique $x$ value found by linear interpolation. Using the membership of the adjacent values in the dataset, we obtain
\[
x_{c_1}\approx 0.405,\qquad
x_{c_2}\approx 0.441,\qquad
x_{c_3}\approx 0.455.
\]
Thus, the breakpoints are approximately
\[
(0.40,\;0.405,\;0.441,\;0.455,\;0.46).
\]

Using Theorem~\ref{th: num_to_Card} with precision $m=3$ (i.e.\ $N=1000$ cards), we compute the number of cards for showing the decision-maker obtaining, the units 93, 591, 232, and  84, respectively. Then, the decision-maker reviews the card structure and remarks:
\begin{quote}
 {``The difference between the $x$ values of $c_1$ and $c_2$ is too high.
Please make the transition smoother by moving 100 cards from the second 
interval to the first.''}
\end{quote}
Following this instruction, the adjusted card distribution is updated, and the result is retranslated into numerical values using the cumulative-card rule  to new $x$--breakpoints
\[
(\tilde{x}_{1}= 0.4115,\tilde{x}_{2}=0.4410,\tilde{x}_{3}=0.4549),
\]
and the left-hand side of the membership function is reconstructed by piecewise-linear interpolation through the points
\[
(0.40,0),\;
(\tilde x_{1}, c_1),\;
(\tilde x_{2}, c_2),\;
(\tilde x_{3}, c_3),\;
(0.46,1).
\]
\end{example}

\section{Numerical Example}
\label{sec:numerical_example}

To illustrate the proposed hybrid methodology, we apply it to a real dataset
obtained from Kaggle.\footnote{\url{https://www.kaggle.com/datasets/muhammadkhubaibahmad/student-performance-and-clustering-dataset}}
The dataset contains academic performance variables, demographic descriptors, and behavioural indicators related to student learning outcomes. For the present numerical example, we focus exclusively on the variable \texttt{quiz1\_marks}, which records the score obtained by each student in the first continuous assessment quiz. This variable is particularly suitable for our purposes because its distribution captures heterogeneity in student performance: low-performing students accumulate near the lower part of the scale, while a substantial proportion achieves intermediate and high marks. 

The raw data may exhibit missing values and outliers. Therefore, rows containing NaNs in the selected variable must be removed. Figure~\ref{fig:quiz1_hist} displays the resulting histogram together with the associated kernel density estimate (KDE). The shape is mildly right-skewed, with a large concentration of mid-range marks and a 
longer tail extending towards the high-performance region. This distribution 
naturally motivates the construction of five fuzzy classes describing low, medium, and high achievement.


\begin{figure}[ht]
    \centering
    \begin{subfigure}[b]{0.45\textwidth}
        \centering
        \includegraphics[width=\textwidth]{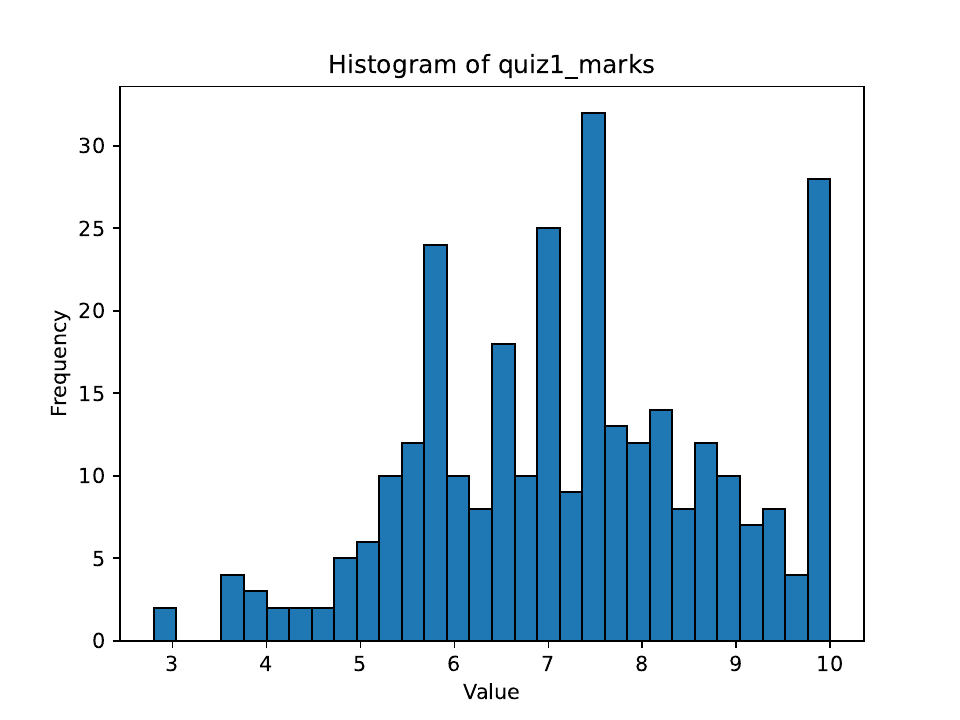}
        \caption{Histogram}
    \end{subfigure}
    \hfill
    \begin{subfigure}[b]{0.45\textwidth}
        \centering
        \includegraphics[width=\textwidth]{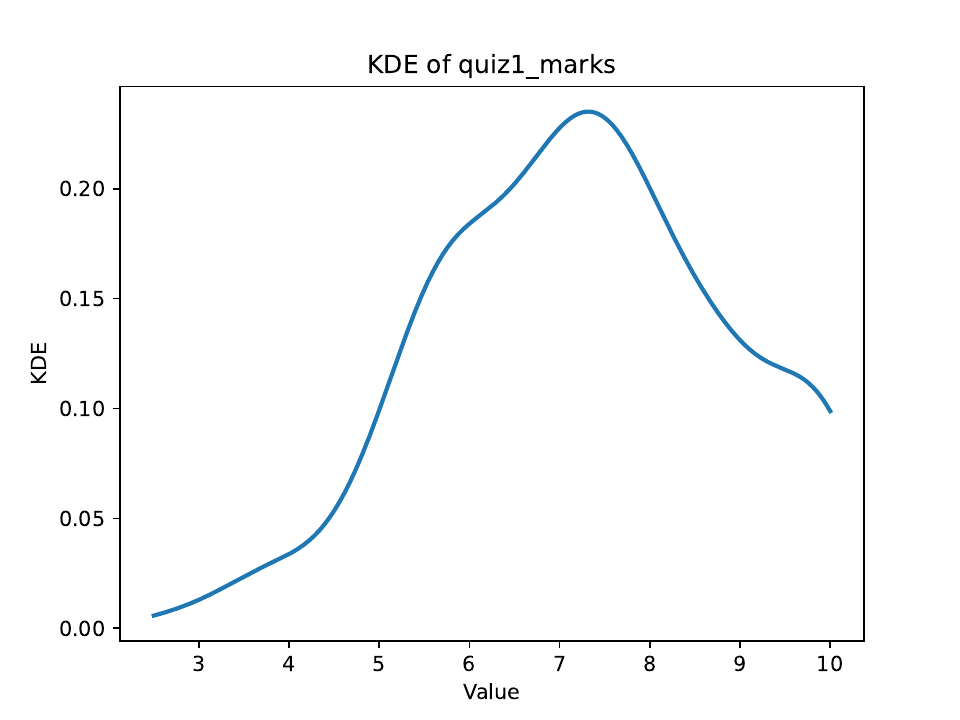}
        \caption{KDE}
    \end{subfigure}
    \caption{Visualization of the data in \texttt{quiz1\_marks}}
    \label{fig:quiz1_hist}
\end{figure}

Firstly, we apply the C-FKM algorithm described in Section~\ref{sec:convex_fkm} with parameters $k=5$ and $m=2$ by initializing the centers as evenly separated values within $[a,b]$, {\it i.e.,}  $v_j=a+(b-a)\frac{j}{k+1}, \ j=1,2,3,4,5$. Let us remark that in our method, each point $x_i$ is allowed to have non-zero membership only in the two consecutive clusters determined by the fixed ordering of the centroids. This guarantees that the resulting membership functions are fuzzy numbers (normal, convex, with compact support) already at the clustering stage. The centroids obtained from running C-FKM on the \texttt{quiz1\_marks} are:
    
\[
v_1 =3.849, \quad
v_2 = 5.683, \quad
v_3 = 7.093 \quad
v_4=8.318\quad
v_5=9.774
\]

These values are the data-driven representative points for the five performance levels. The corresponding membership functions are shown in Figure~\ref{fig:quiz1_MF_step1} together with the centroids. These centroids are transformed to Deck-of-Cards units and shown to the decision-maker for validation. The obtained values are
\begin{equation*}
    a =2.8\ [14]\ v_1\ [26]\ v_2\ [19]\ v_3\ [17]\ v_4\ [20]\ v_5\ [4]\ b=10
\end{equation*}
where the upper and lower bounds $[a,b]$ have been set as the bounds of the dataset. In view of this, the decision-maker considers that the highest performance level is too high, and thus he moves 5 cards from between $v_4$ and $v_5$ to the range between $v_5$ and $b$. After the computations, the resulting centroid vector is $v=(3.808, 5.68,  7.048, 8.272, 9.352)$

\begin{figure}[ht]
    \centering
    \begin{subfigure}[b]{0.45\textwidth}
        \centering
        \includegraphics[width=\textwidth]{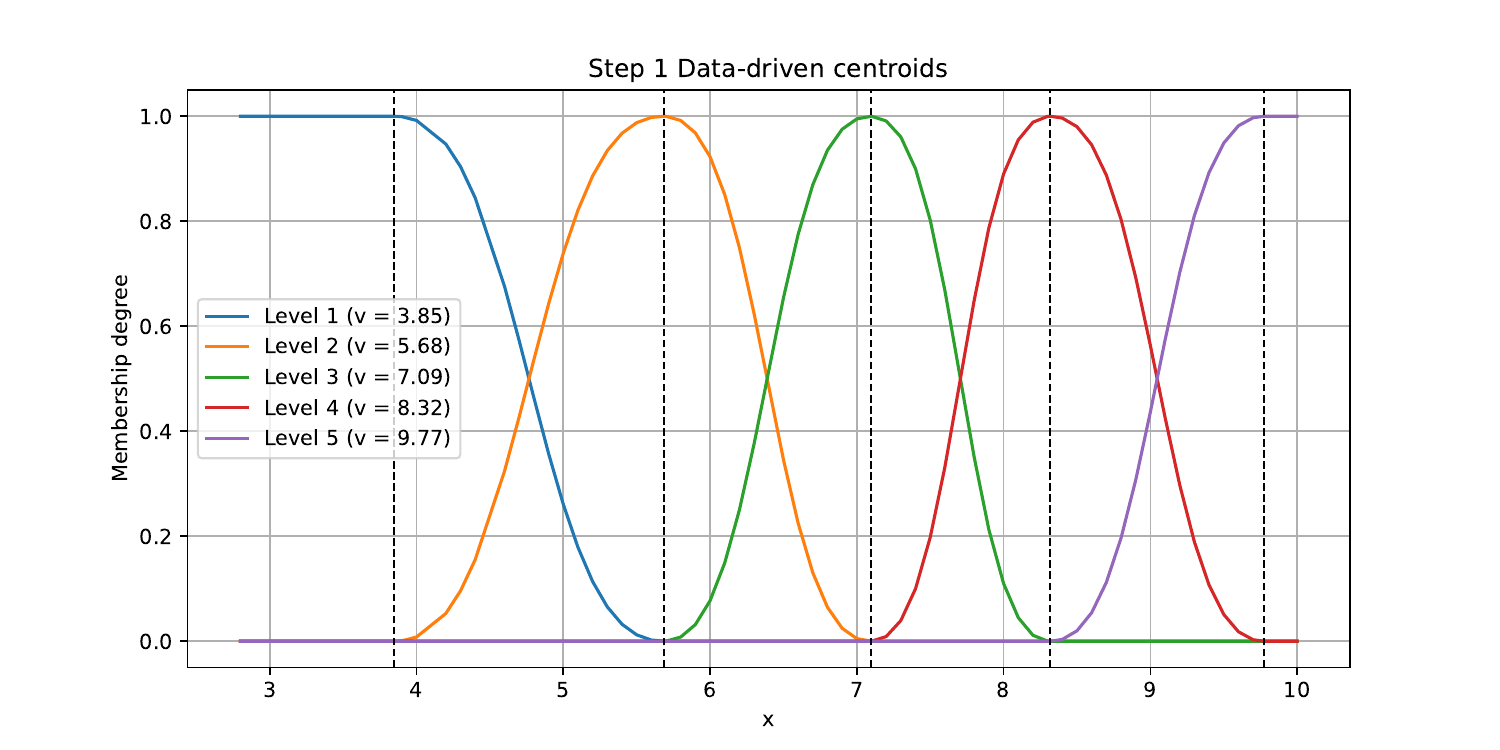}
        \caption{Data-driven}
    \end{subfigure}
    \hfill
    \begin{subfigure}[b]{0.45\textwidth}
        \centering
        \includegraphics[width=\textwidth]{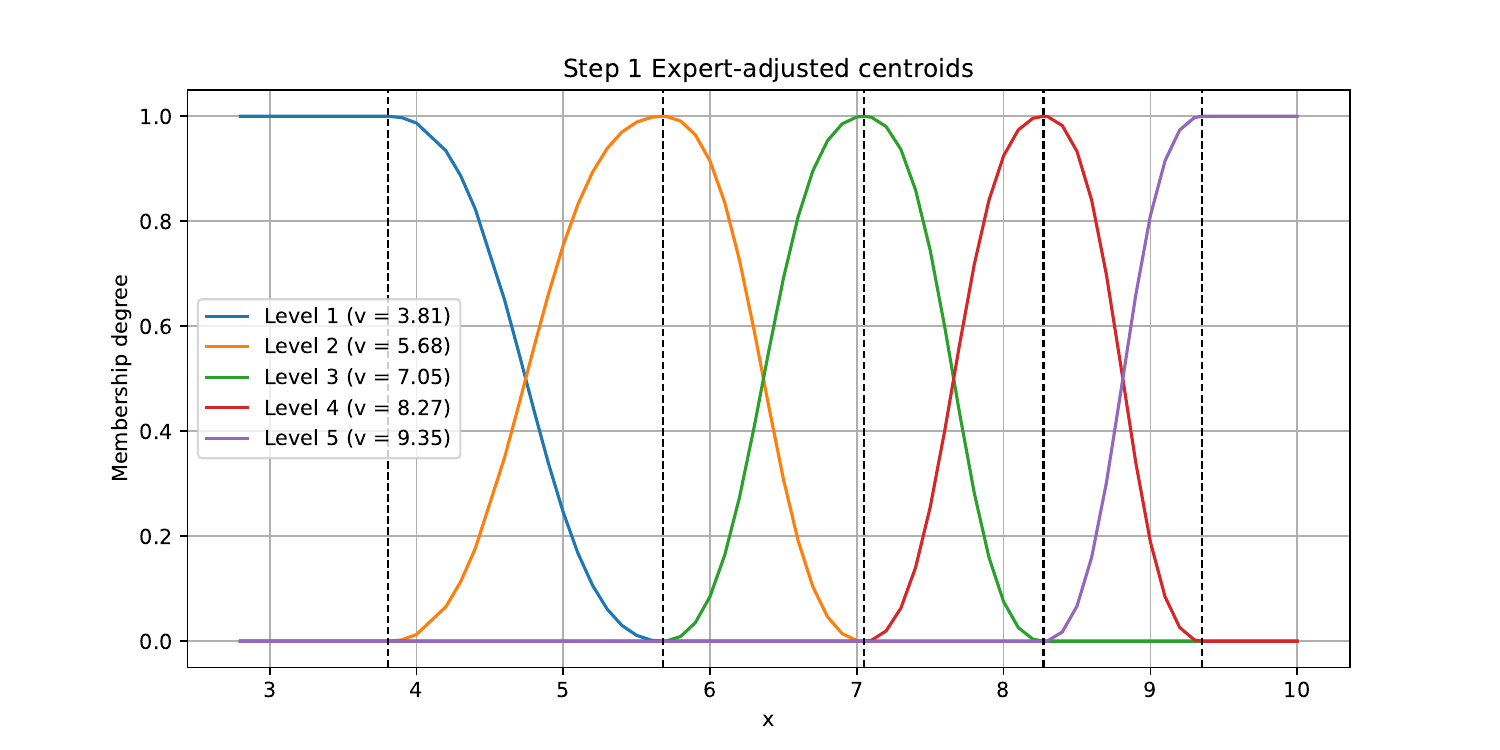}
        \caption{Expert-adjusted}
    \end{subfigure}
    \caption{Membership functions obtained from C-FKM}
    \label{fig:quiz1_MF_step1}
\end{figure}

Given the C-FKM membership functions, the core of each fuzzy number is defined 
as the interval of values in the dataset where $V_j(x) \geq 1-\tau$, with $\tau=0.01$. Supports are defined as the minimal intervals on which $\mu_j(x)>0$, and can be computed directly from the cores due to the model constraints. Consequently, the cores are given by $[2.8, 3.9], [5.6, 5.8], [7.0, 7.1], [8.2, 8.3]$ and $[9.3, 10.0]$, and the obtained chain of lower and upper bounds, {\it i.e.,}  $2.8,  3.9,  5.6,  5.8,  7,   7.1,  8.2,  8.3,  9.3, 10$ is transformed into units using again Theorem \ref{th: num_to_Card}, and presented to the decision-maker for adjustment:
\begin{equation*}
    c_1^-\, [15]\, c_1^+ [23]\, c_2^- [3] \,c_2^+ [17]\, c_3^-\, [1]\, c_3^+ \,[15] \,c_4^-\,  [2]\, c_4^+\, [14]\, c_5^-\, [10] \,c_5^+
\end{equation*}

After checking the values, he decision-maker decides to update the units as follows:
\begin{equation*}
   c_1^-\, [14]\, c_1^+ [19]\, c_2^- [7] \,c_2^+ [14]\, c_3^-\, [5]\, c_3^+ \,[12] \,c_4^-\,  [5]\, c_4^+\, [14]\, c_5^-\, [10] \,c_5^+
\end{equation*}
resulting in the following chain for the validated cores:
\begin{equation*}
     [2.8,    3.808]\,  [5.176,  5.68]\,   [6.688 , 7.048]\,  [7.912,  8.272]\,  [9.28,  10]
\end{equation*}
In Figure \ref{fig:quiz1_MF_step2}, we display the comparison between the cores and supports before and after expert intervention. Note that the validated centroids lie within their respective cores.

\begin{figure}[ht]
    \centering
    \begin{subfigure}[b]{0.45\textwidth}
        \centering
        \includegraphics[width=\textwidth]{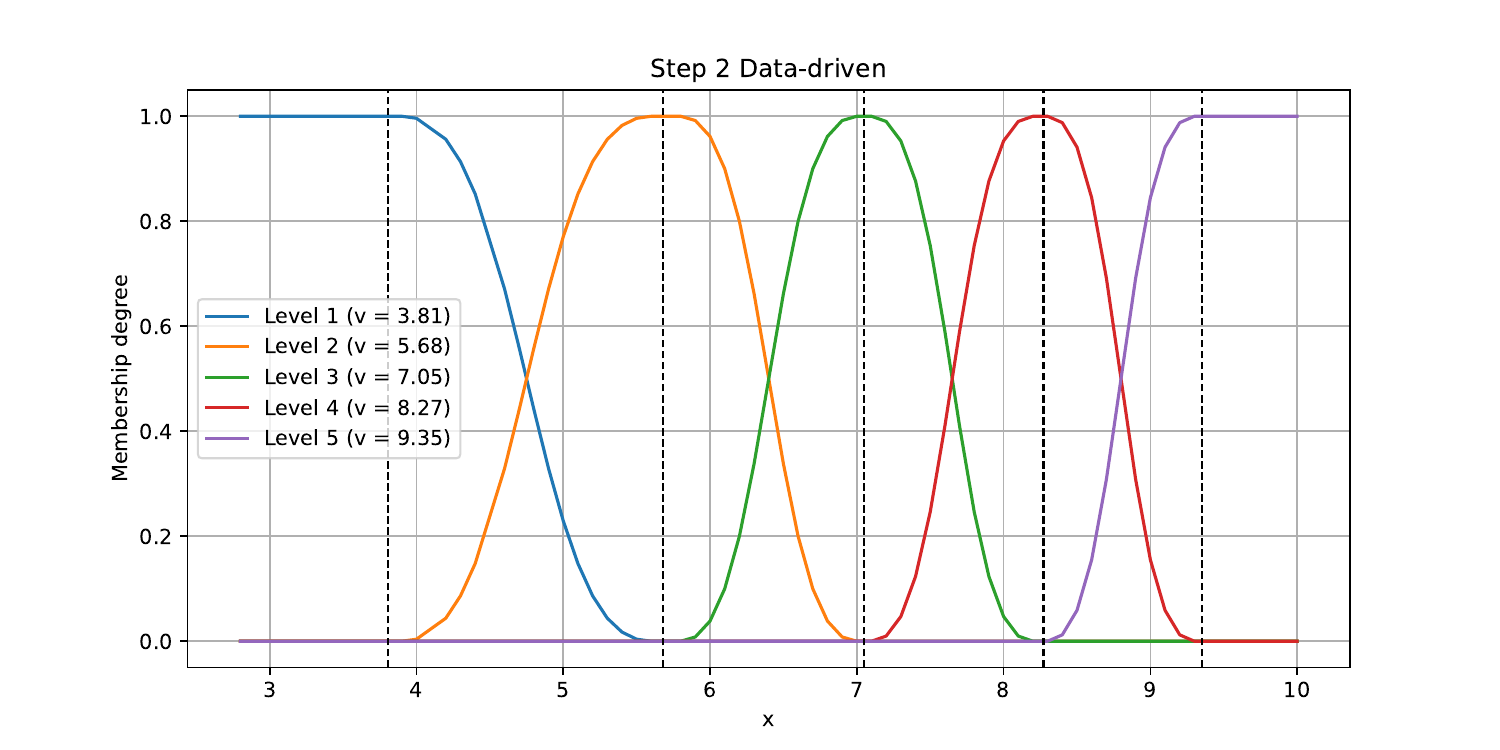}
        \caption{Data-driven}
    \end{subfigure}
    \hfill
    \begin{subfigure}[b]{0.45\textwidth}
        \centering
        \includegraphics[width=\textwidth]{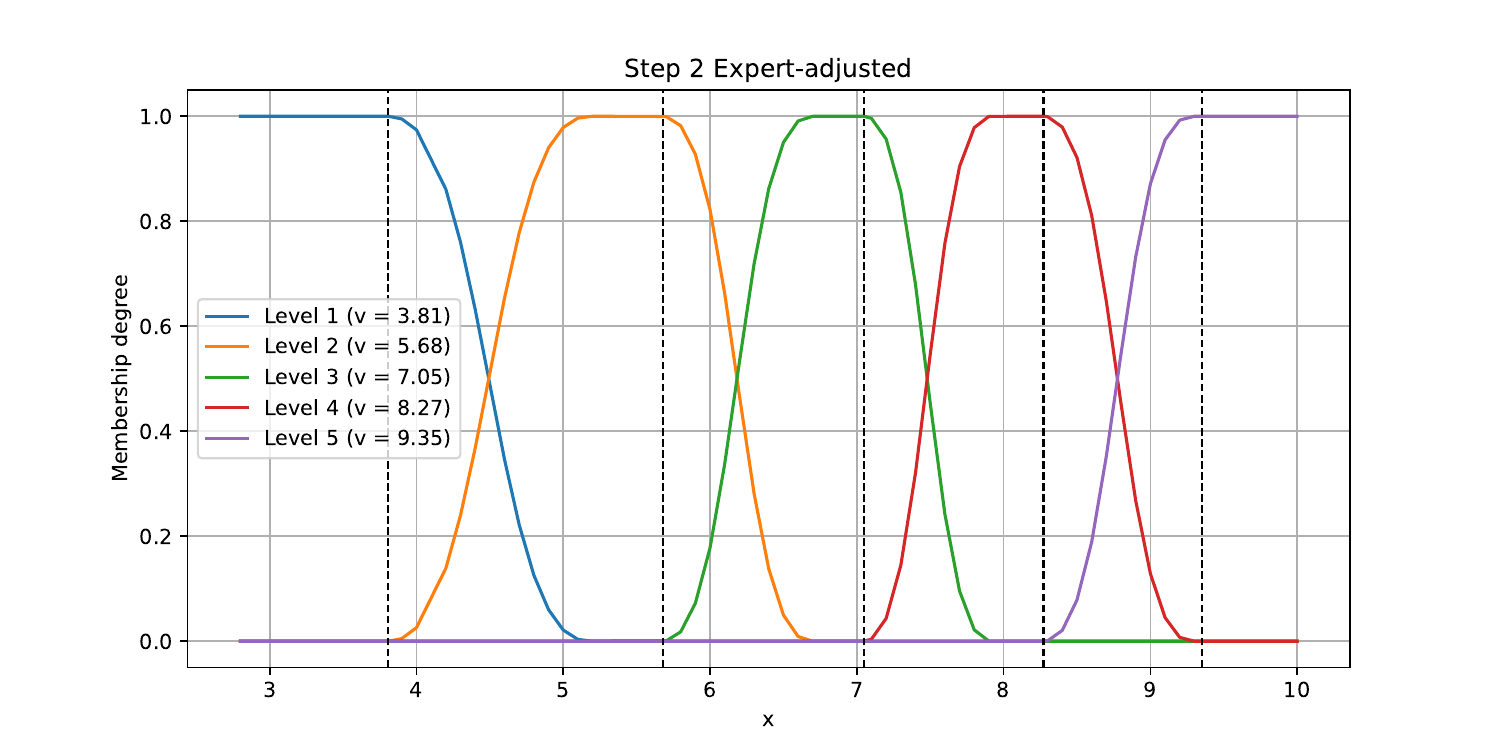}
        \caption{Expert-driven}
    \end{subfigure}
    \caption{Membership functions after validating cores and supports}
    \label{fig:quiz1_MF_step2}
\end{figure}

After we show the new version of the membership functions to the decision-maker, he confirms that he is satisfied with all the memberships, with the exception of the behaviour between the first and the second cores.

For the values on this side, the membership degrees of $V_0$ at the right of the core are clustered to identify the three representative points, namely $0.08,0.6$ and $0.93$ (see Figure \ref{fig:quiz1_MF_step3}). The decision-maker says that these membership degrees are representative enough, so we proceed to interpolate and find the corresponding values in the $x$ axis (i.e, the upper bounds of the corresponding $\alpha-$cut), which are, respectively, $4.86, 4.58$, and $4.06$. The decision-maker feels that these values do not represent the right break-points for the membership. Therefore, we express them as cards, and after modifying the units between the levels, we finally obtain the validated breakpoints $4.06$, $4.46$, and $4.78$. After this step, the final memberships are obtained, as displayed in Figure \ref{fig:quiz1_MF_step3}.
\begin{figure}[ht]
    \centering
    \begin{subfigure}[b]{0.45\textwidth}
        \centering
        \includegraphics[width=\textwidth]{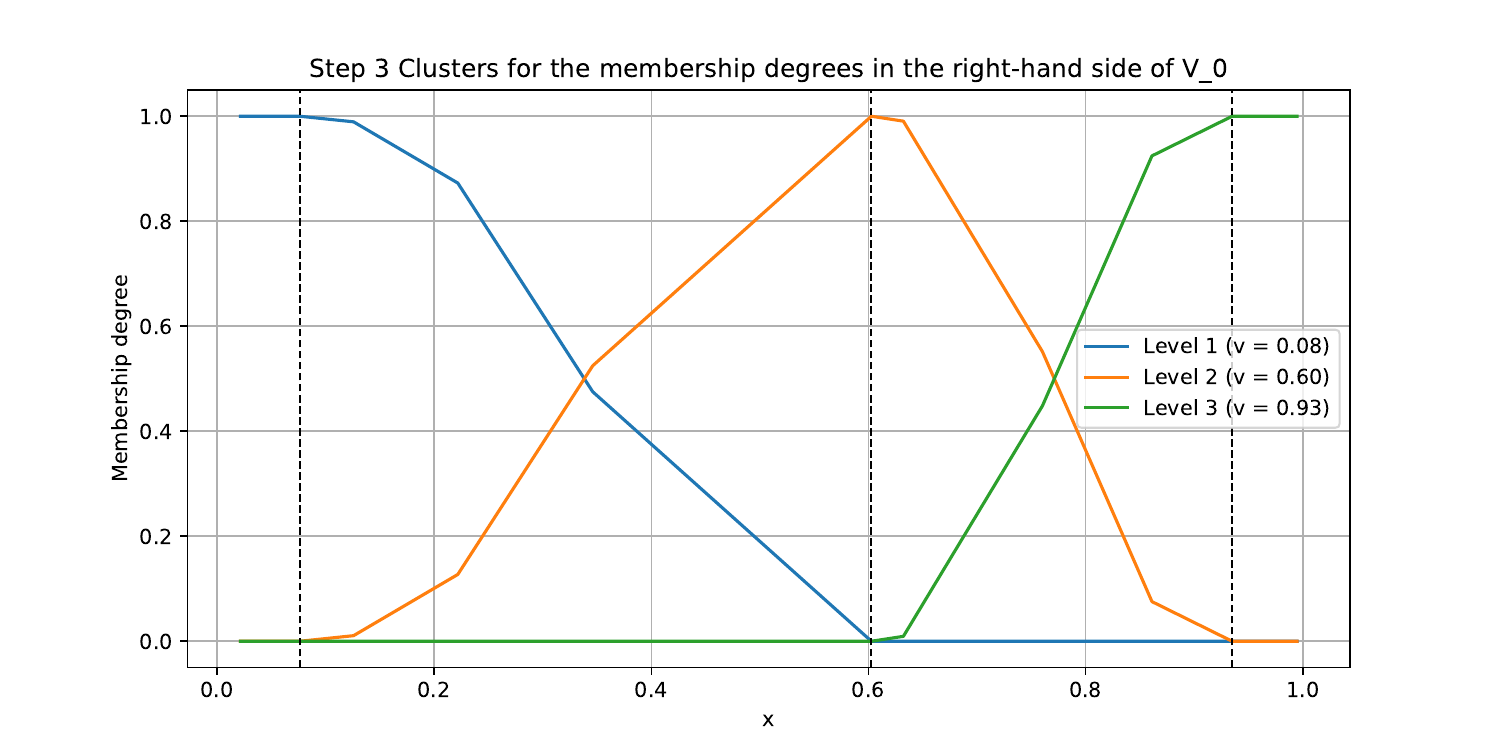}
        \caption{Clusters of the membership degrees}
    \end{subfigure}
    \hfill
    \begin{subfigure}[b]{0.45\textwidth}
        \centering
        \includegraphics[width=\textwidth]{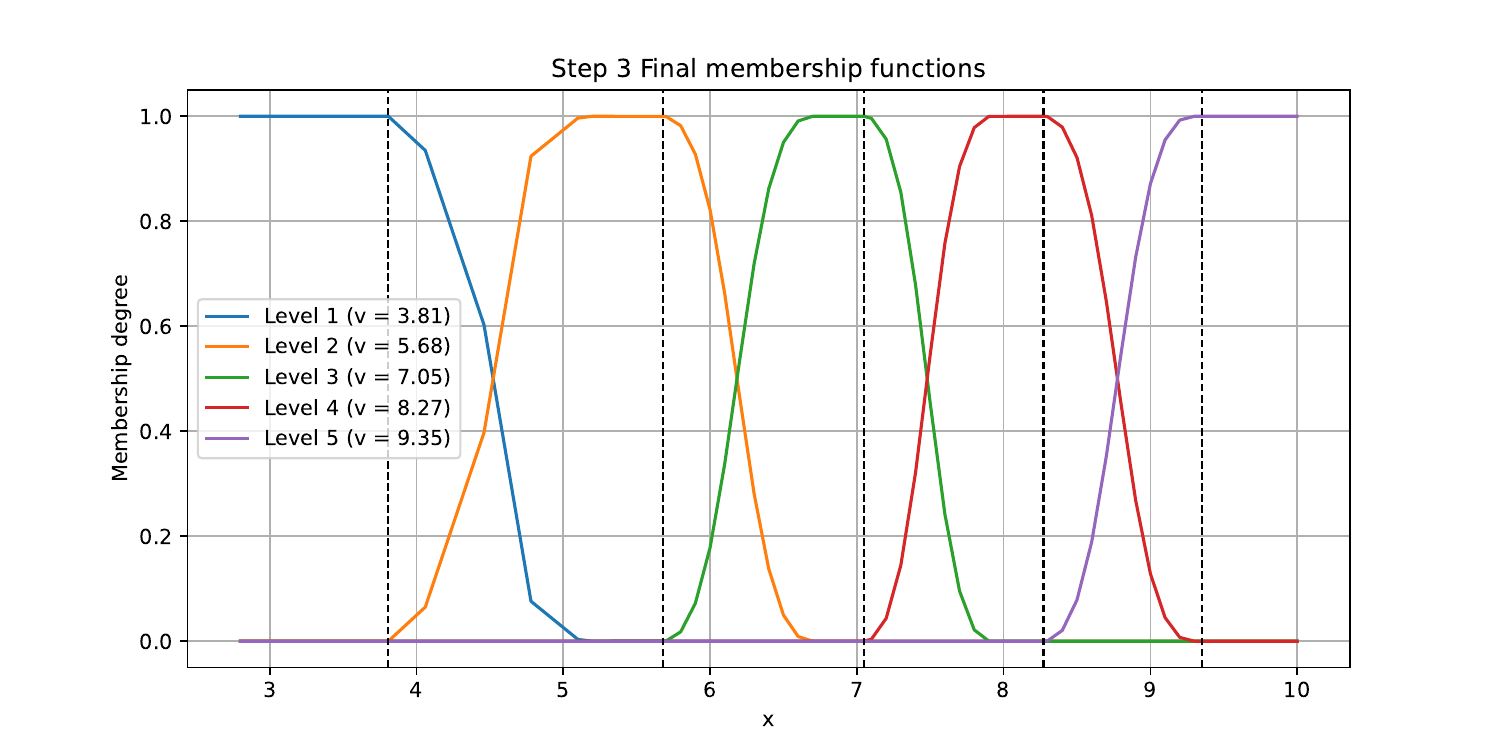}
        \caption{Final membership functions}
    \end{subfigure}
    \caption{Membership functions obtained at the end of the process}
    \label{fig:quiz1_MF_step3}
\end{figure}


\section{Influence of data distribution on fuzzy uncertainty profiles}
\label{sec:comparison}

Figures~\ref{fig:dist_histogram}, \ref{fig:dist_3}, and \ref{fig:dist_5} present a comparison of the 
output of the methodology when applied to three synthetic datasets with distinct distributional properties: a symmetric dataset, a skewed dataset, and 
a multimodal dataset. Since the data distributions differ from normality, in this section, we have considered the initialized centroids as the percentiles $100\cdot j/(k+1)$, for $j=1,...,k$. This comparative analysis aims to assess how the hybrid approach adapts to different uncertainty structures present in the data and to evaluate whether the resulting fuzzy numbers appropriately reflect such variations. 

Figure~\ref{fig:dist_histogram} shows that the first dataset is well represented by a bell-shaped distribution centred near the midpoint of the scale, whereas the second exhibits a clear right skew with most values concentrated in the lower region. The third dataset displays three separated modes, indicating 
multiple latent subgroups in the underlying population. These qualitative features constitute the basis upon which the fuzzy numbers must be constructed.

When the number of fuzzy classes is set to $k=3$ (Figure~\ref{fig:dist_3}), the membership functions resulting from the symmetric dataset present balanced 
shapes with smooth transitions and almost equidistant centroids. This alignment reflects a situation in which uncertainty is distributed uniformly around a central concept, and no region of the domain dominates in frequency or relevance. In the skewed case, however, the shapes become clearly asymmetric: the lowest class acquires a narrow core, closely following the concentration of data, while the upper class 
spreads its support widely over a region with limited evidence. This behaviour correctly captures the notion that high values are rare and should therefore 
be associated with higher uncertainty. For the multimodal dataset, the method identifies distinct transition areas that coincide with local density valleys, thus revealing a structure that a unimodal model would obscure.

Increasing the number of classes to $k=5$ (Figure~\ref{fig:dist_5}) enhances the granularity of the representation and further highlights the influence of distributional characteristics. In the symmetric dataset, the additional fuzzy numbers appear orderly and evenly spaced, strengthening interpretability 
without altering the inherent symmetry of the partition. In contrast, for the skewed and multimodal datasets, the additional classes tend to populate the densest regions of the data while leaving wide, low-evidence intervals in the tails or between modal clusters. The shapes become more complex but remain meaningfully related to the underlying observations: abrupt changes in membership occur where the data suggest clear boundaries, whereas slow transitions emerge in more ambiguous or underrepresented regions.

\begin{figure}[H]
    \centering
    \includegraphics[width=0.3\textwidth]{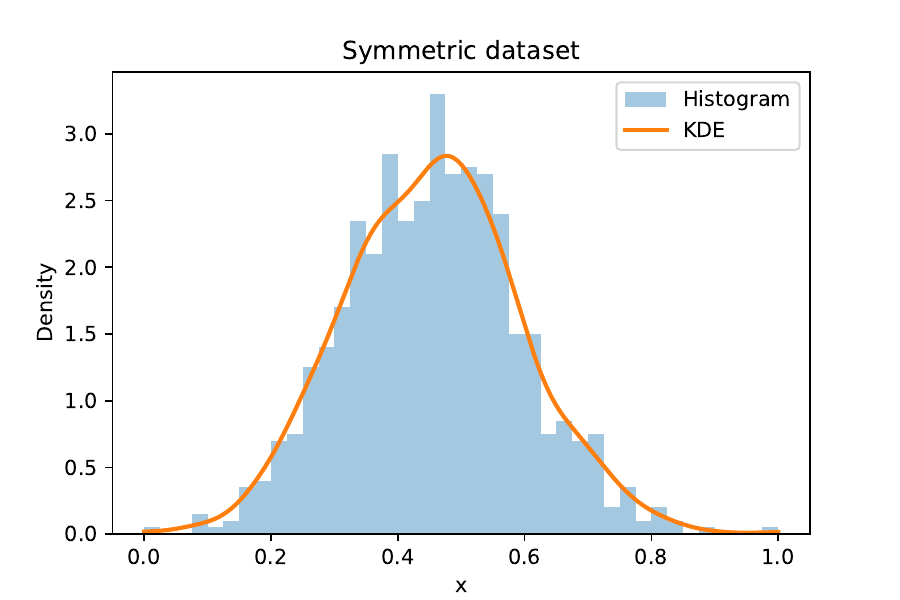}
    \includegraphics[width=0.3\textwidth]{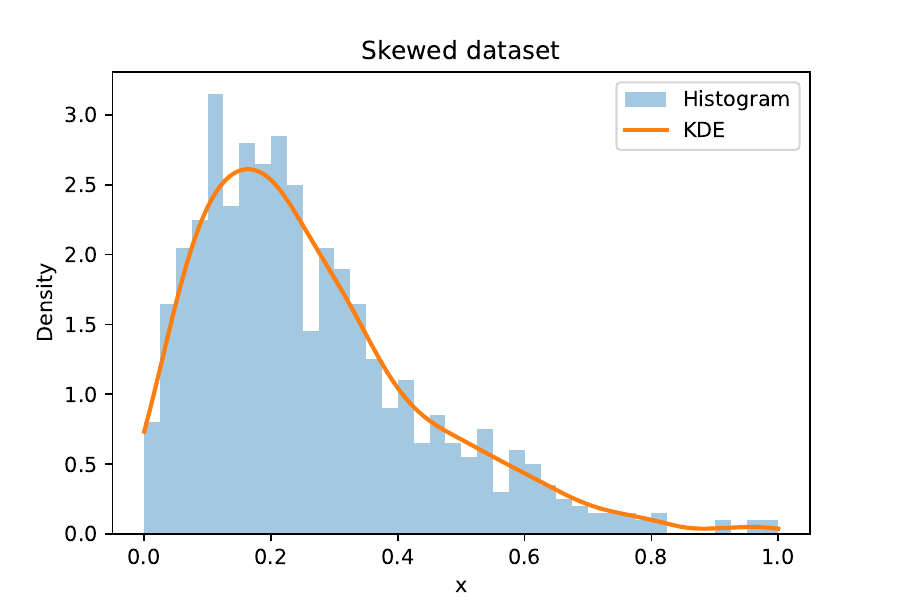}
    \includegraphics[width=0.3\textwidth]{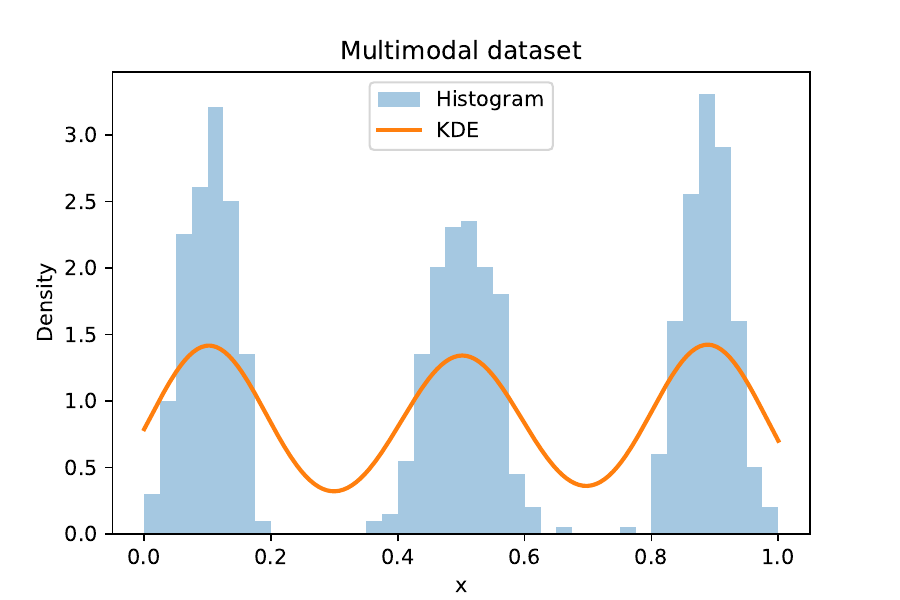}

    \caption{Histograms and KDE of the data}
    \label{fig:dist_histogram}
\end{figure}

\begin{figure}[H]
    \centering
    \includegraphics[width=0.3\textwidth]{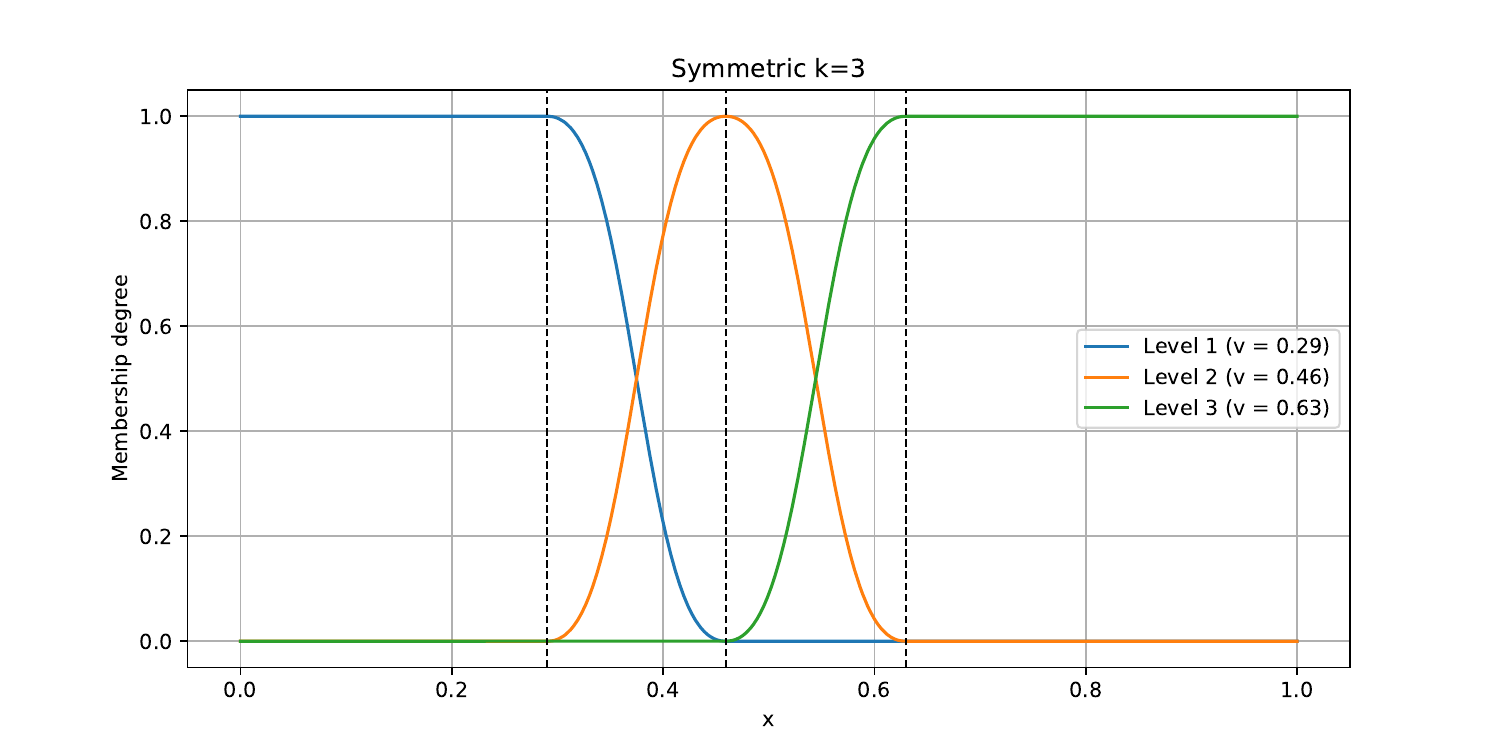}        \includegraphics[width=0.3\textwidth]{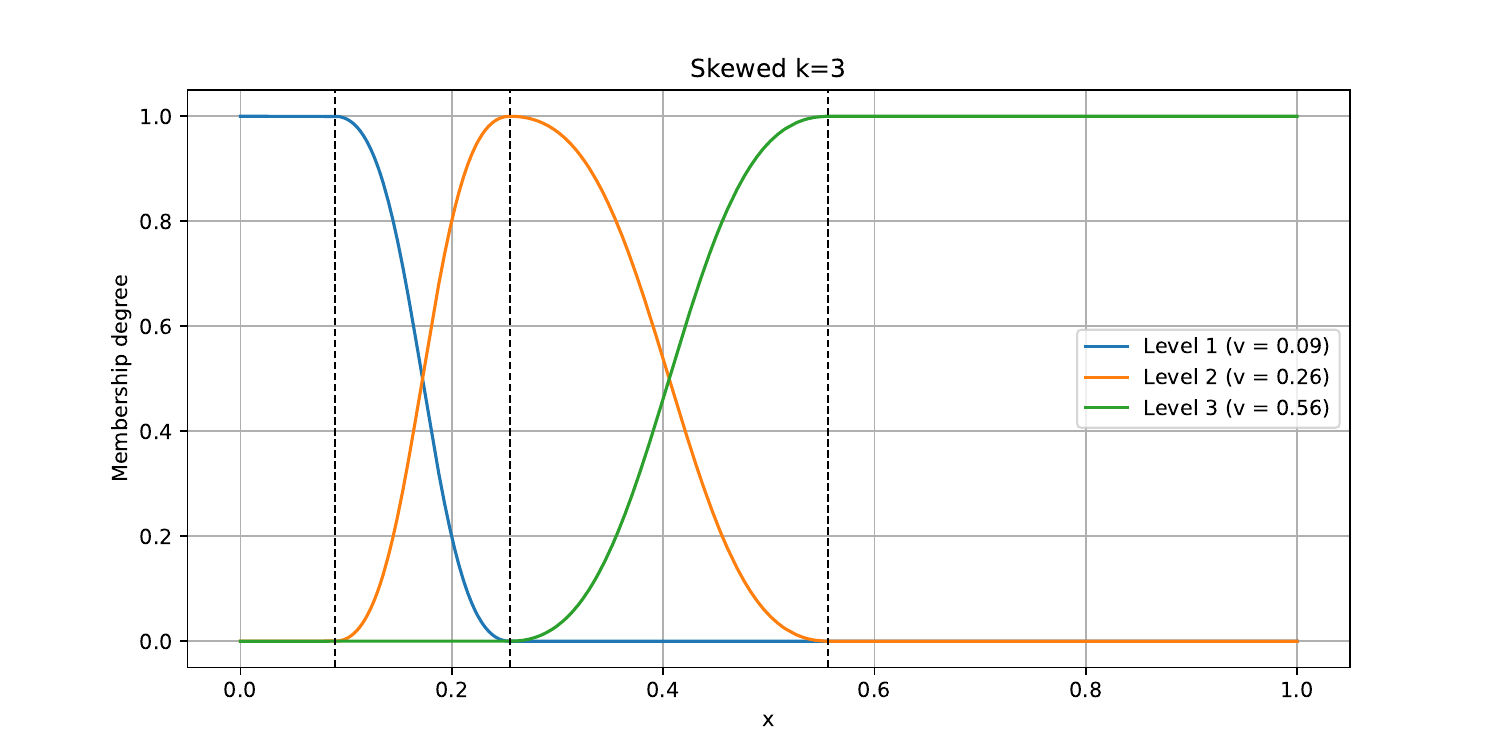}        \includegraphics[width=0.3\textwidth]{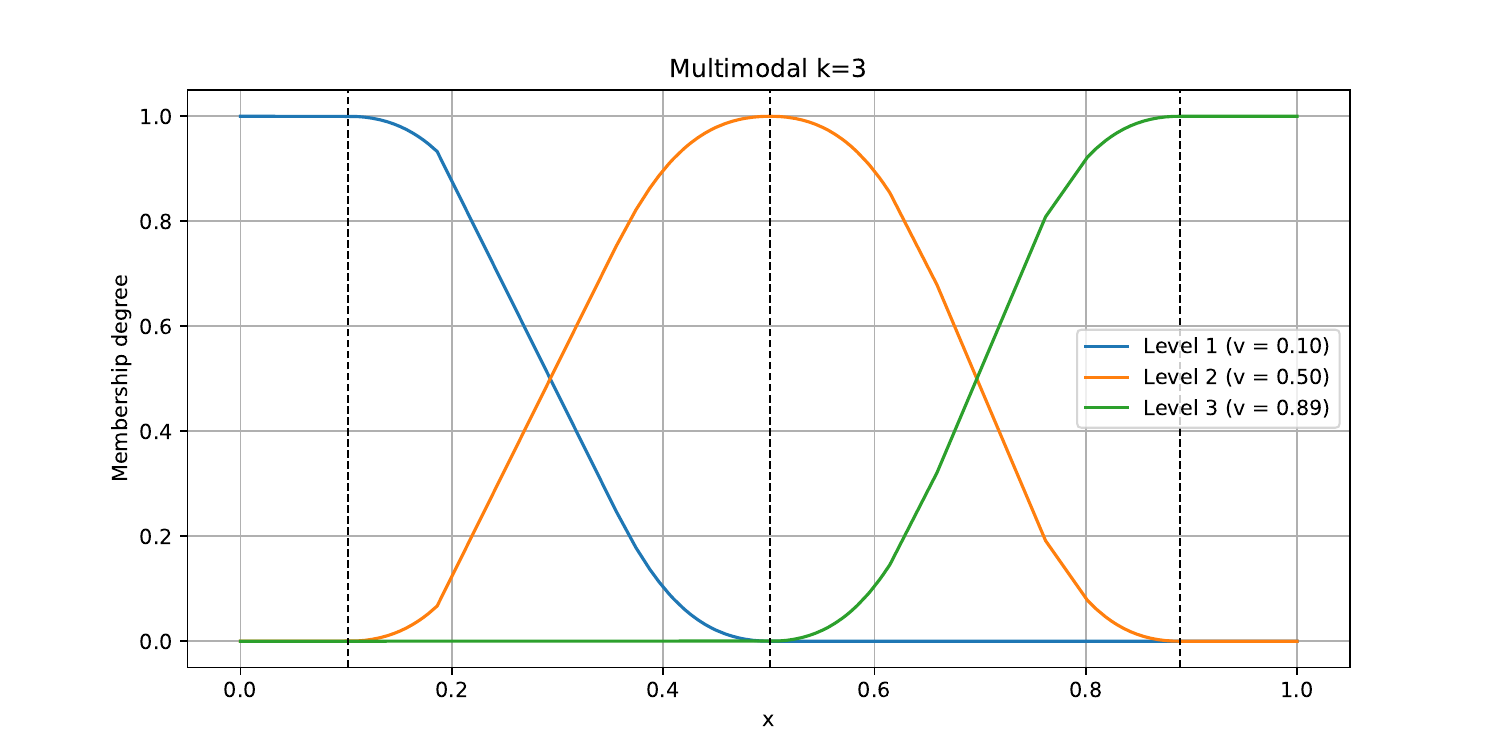}
    \caption{Fuzzy classes for $k=3$}
    \label{fig:dist_3}
\end{figure}

\begin{figure}[H]
    \centering        \includegraphics[width=0.3\textwidth]{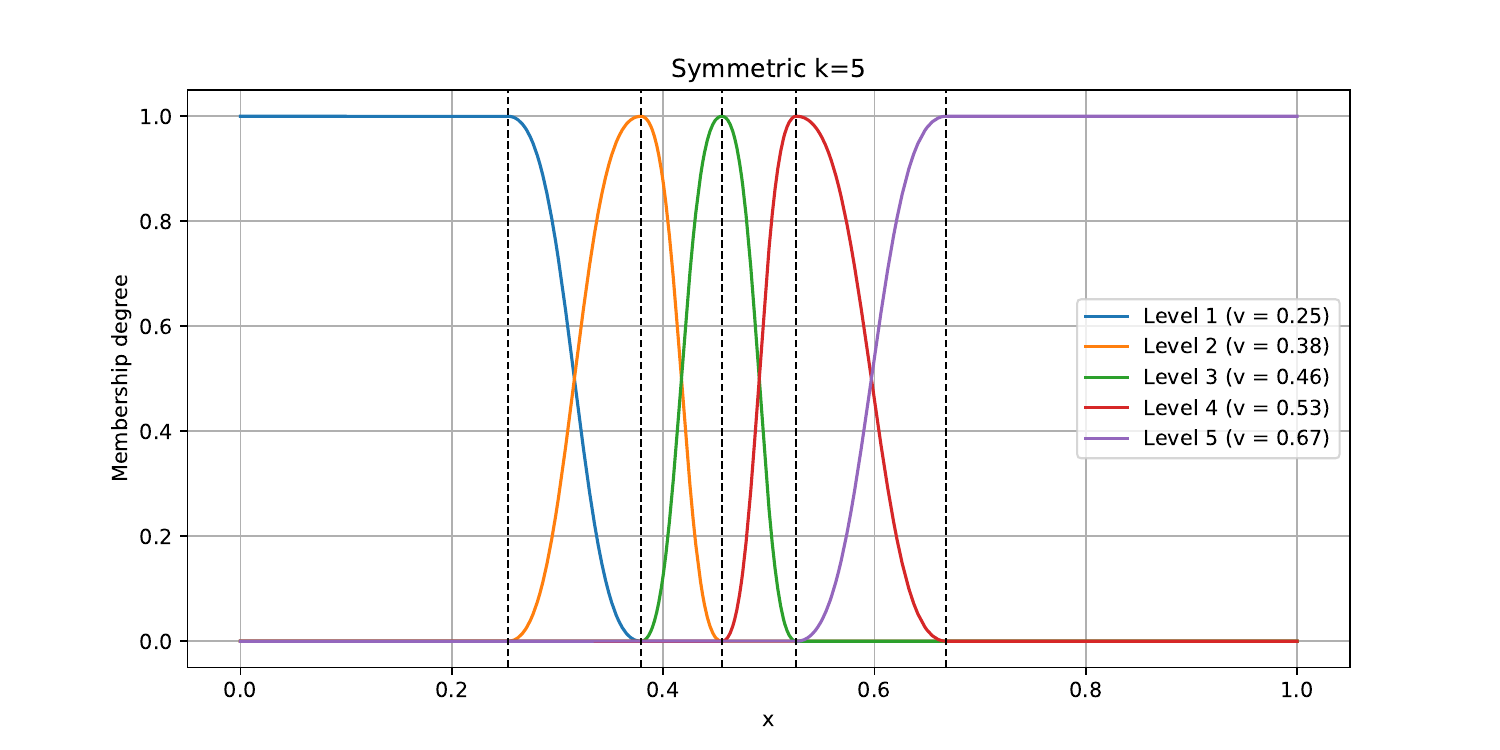}
    \includegraphics[width=0.3\textwidth]{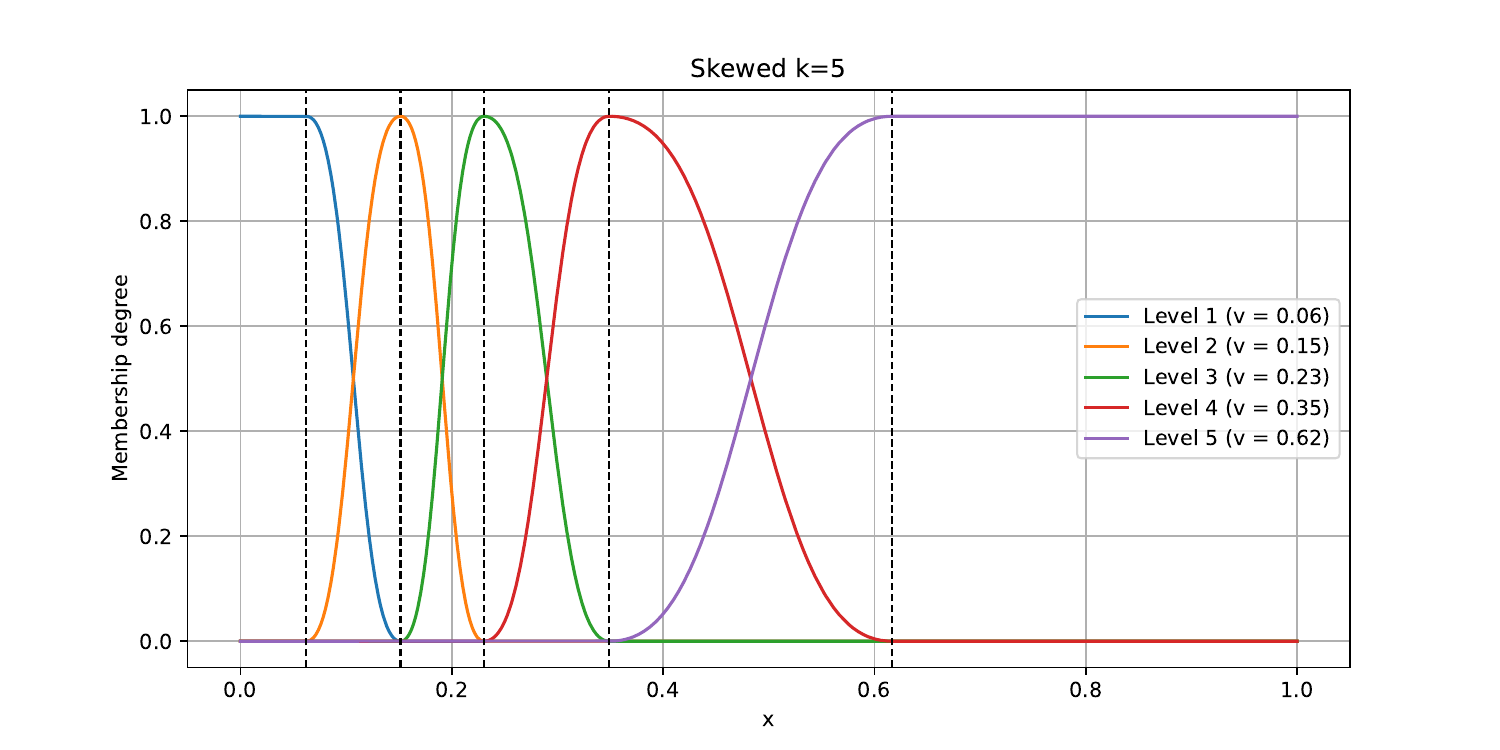}
    \includegraphics[width=0.3\textwidth]{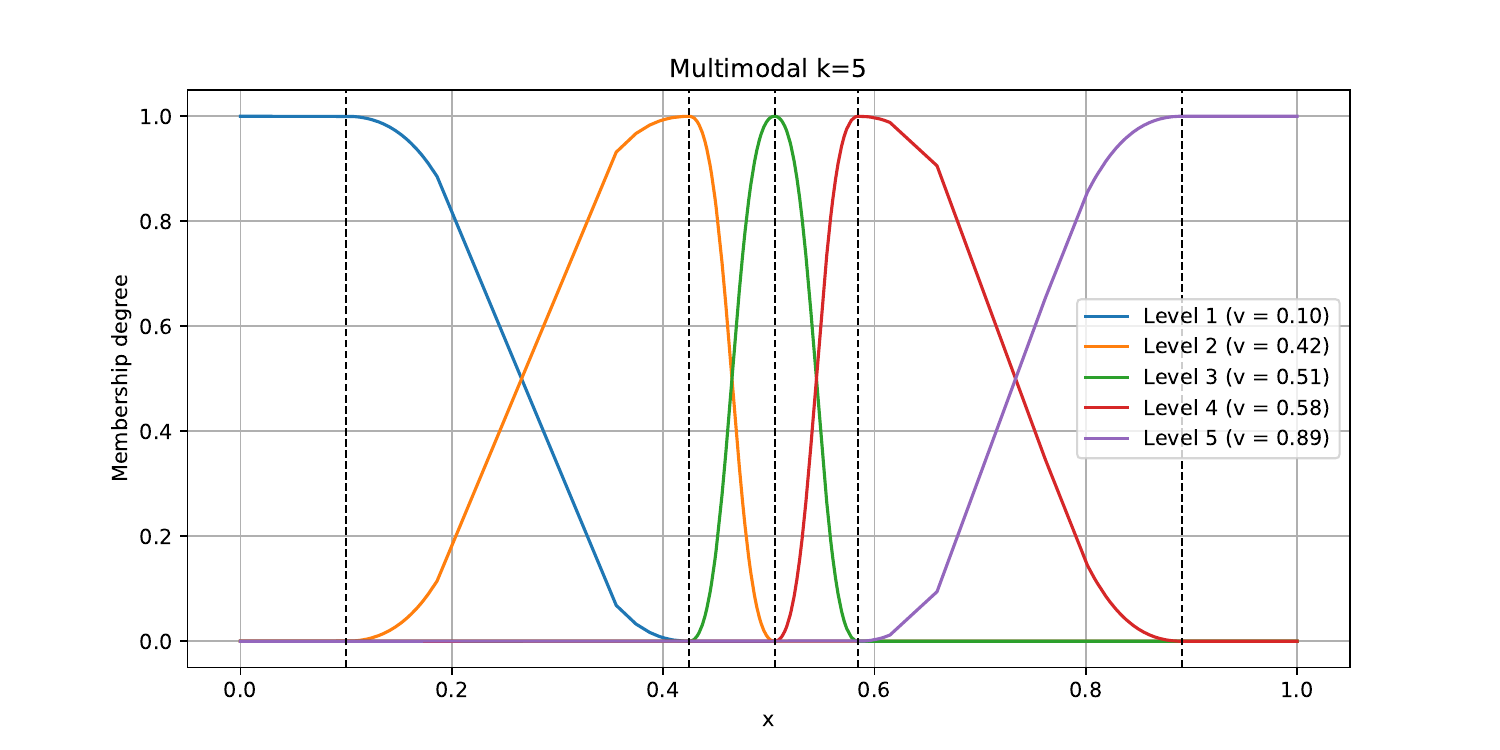}
    
    \caption{Fuzzy classes for $k=5$}
    \label{fig:dist_5}
\end{figure}
\section{Conclusions}
\label{sec:conclusion}

This work has introduced a hybrid socio-technical methodology for constructing fuzzy numbers by
combining numerical data with expert elicitation through the DoC method. The proposed approach extends the DoC-MF framework by incorporating a data-driven pipeline based
on a novel convex version of the classical fuzzy $k$-means clustering algorithm, which ensures that each fuzzy set generated during the computational stage is a fuzzy number by construction. At each step, the numerical outputs are transformed into card-based units, enabling domain experts to validate and refine the shapes, cores, supports, and slopes of the resulting membership functions using a transparent and cognitively meaningful representation.

The synthetic case studies demonstrated that the methodology adapts consistently to different distributional structures such as symmetry, skewness, and multimodality, without imposing rigid functional forms or sacrificing interpretability. The numerical experiment with real educational data further illustrated the ability of the approach to capture heterogeneous performance profiles, while still preserving the semantic meaning required for decision support.

The methodology provides an effective compromise between purely data-driven modeling, which may lack interpretability, and purely expert-driven construction, which may fail to reflect empirical evidence. The resulting membership functions faithfully represent both quantitative information and qualitative judgements, offering a robust foundation for downstream decision-making processes.

The current work opens new venues for future research. Firstly, we will explore the integration of alternative distance measures into C-FKM to better handle ordinal, categorical, or mixed data types. Secondly, the third step of the methodology could be further enhanced by 
extending C-FKM to multidimensional settings, enabling simultaneous refinement of both sides of a membership
function and capturing more complex uncertainty shapes. Finally, we aim to integrate the method into a complete MCDA framework, evaluating its impact on ranking robustness and preference learning tasks in real-world decision problems.

\section*{Disclosure of interest}
The authors report there are no competing interests to declare.

\section*{Declaration of Generative AI and AI-assisted technologies in the writing process}
During the preparation of this work, the authors used Gemini 3 (Google) and Grammarly (both accessed in December 2025) to improve the language of the manuscript. After using these tools, the authors reviewed and edited the content as needed and take full responsibility for the content of the publication.
\section*{Acknowledgments}
\noindent José Rui Figueira is financed by Portuguese funds through the FCT – Foundation for Science and Technology under project UID/97/2025 (CEGIST). Diego García-Zamora is financed by the mobility grant CAS24/00249 from the Spanish Ministry of Science, Innovation, and Universities, which supported his research stay at the Instituto Superior Técnico, Universidad de Lisboa. Diego García-Zamora also acknowledges the support of CEGIST for all the assistance during his research stay.

\addcontentsline{toc}{section}{\numberline{}References}
\bibliographystyle{model5-names}
\bibliography{biblio}

\end{document}